\newtheorem{theorem}{Theorem}[section]
\newtheorem{lemma}[theorem]{Lemma}
\newtheorem{proposition}[theorem]{Proposition}
\newtheorem{theorema}{Theorem}
\theoremstyle{definition}
\newtheorem{remark}[theorem]{Remark}
\numberwithin{equation}{section}
\newcommand{\GL}{{\mathrm {GL}}}
\newcommand{\PGL}{{\mathrm {PGL}}}
\newcommand{\SL}{{\mathrm {SL}}}
\newcommand{\PSL}{{\mathrm {PSL}}}
\newcommand{\GU}{{\mathrm {GU}}}
\newcommand{\SU}{{\mathrm {SU}}}
\newcommand{\PSU}{{\mathrm {PSU}}}
\newcommand{\Ker}{\operatorname{Ker}}
\newcommand{\Irr}{{\mathrm {Irr}}}
\newcommand{\ord}{{\mathrm {ord}}}
\renewcommand{\Im}{{\mathrm {Im}}}
\newcommand{\diag}{{\mathrm {diag}}}
\newcommand{\Syl}{{\mathrm {Syl}}}
\newcommand{\St}{{\mathsf {St}}}
\newcommand{\lcm}{{\mathrm {lcm}}}
\newcommand{\QQ}{{\mathbb Q}}
\newcommand{\ZZ}{{\mathbb Z}}
\newcommand{\FF}{{\mathbb F}}
\newcommand{\EC}{\mathcal{E}}
\newcommand{\bC}{{\mathbf{C}}}
\newcommand{\bG}{{\mathbf{G}}}
\newcommand{\bS}{{\mathbf{S}}}
\newcommand{\bT}{{\mathbf{T}}}
\newcommand{\bB}{{\mathbf{B}}}
\newcommand{\bU}{{\mathbf{U}}}
\newcommand{\bZ}{{\mathbf{Z}}}
\newcommand{\Al}{\textup{\textsf{A}}}
\newcommand{\Sy}{\textup{\textsf{S}}}
\makeatletter \@namedef{subjclassname@2020}{\textup{2020}
Mathematics Subject Classification} \makeatother
\def\nor{\trianglelefteq\,}
\begin{document}

\title[Hall $\pi$-subgroups and characters of $\pi'$-degree]
{Hall $\pi$-subgroups and characters of $\pi'$-degree}

\author[Eugenio Giannelli]{Eugenio Giannelli}
\address{Dipartimento di Matematica e Informatica U. Dini, Firenze,
Italy}
\email{eugenio.giannelli@unifi.it}

\author[Nguyen N. Hung]{Nguyen N. Hung}
\address{Department of Mathematics, The University of Akron, Akron,
OH 44325, USA}
\email{hungnguyen@uakron.edu}

\thanks{The first author's research is funded by: the European Union Next
Generation EU, M4C1, CUP B53D23009410006, PRIN 2022 - 2022PSTWLB
Group Theory and Applications; and the INDAM-GNSAGA Project CUP
E53C24001950001. The second author gratefully acknowledges support
from the AMS--Simons Research Enhancement Grant (AWD-000167 AMS). We
thank Gunter Malle for several helpful comments on an earlier
version, particularly regarding
Proposition~\ref{prop:linear-unitary}}

\subjclass[2020]{Primary 20C15, 20C30, 20C33, 20D20}
\keywords{Characters, fields of values, Hall $\pi$-subgroups,
characters of $\pi'$-degree.}


\thanks{}

\begin{abstract}
We study the relationship between the existence of Hall
$\pi$-subgroups and that of irreducible characters of $\pi'$-degree
with prescribed fields of values in finite groups. This work extends
a result of Navarro and Tiep from a single odd prime to multiple odd
primes.
\end{abstract}

\maketitle



\section{Introduction}

A well-known conjecture of R.~Gow, proved by G.~Navarro and
P.~H.~Tiep~\cite{Navarro-Tiep2}, asserts that if $G$ is a finite
group of even order, then $G$ possesses a nontrivial (complex)
irreducible character of odd degree whose values are rational. This
result was subsequently generalized to all primes by the same
authors, as follows.

\begin{theorem}[\cite{Navarro-Tiep1}, Theorem~A]\label{thm:Navarro-Tiep}
Let $G$ be a finite group of order divisible by a prime $p$. Then
$G$ has a nontrivial irreducible character of degree not divisible
by $p$ whose values lie in $\QQ(e^{2\pi i/p})$.
\end{theorem}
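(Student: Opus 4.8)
The plan is to translate both requirements on the sought character into a single Galois-invariance condition, then reduce to non-abelian simple groups by induction and Clifford theory, and finally appeal to the classification.

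First I would set $F := \QQ(e^{2\pi i/|G|})$ and identify $\Gal(F/\QQ)$ with $(\ZZ/|G|\ZZ)^\times$ via $\sigma_k(\zeta)=\zeta^k$ for a primitive $|G|$-th root of unity $\zeta$. Since $\sigma_k$ fixes $e^{2\pi i/p}$ exactly when $k\equiv 1 \pmod p$, the field $\QQ(e^{2\pi i/p})$ is the fixed field of $\mathcal{H}:=\{\sigma_k : k\equiv 1 \pmod p\}$ (here $p\mid|G|$ guarantees $e^{2\pi i/p}\in F$), and a character $\chi$ satisfies $\QQ(\chi)\subseteq\QQ(e^{2\pi i/p})$ if and only if $\chi^{\sigma}=\chi$ for every $\sigma\in\mathcal{H}$. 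Thus the theorem becomes the assertion that the set $\Irr_{p'}(G)^{\mathcal{H}}$ of $\mathcal{H}$-fixed irreducible characters of $p'$-degree contains a character other than the trivial one. Since $1_G$ always belongs to this set, the entire content is the production of a \emph{second} member.

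Next I would argue by induction on $|G|$, reducing via Clifford theory to a group with a unique minimal normal subgroup $M$. If $M$ is a $p'$-group then $p\mid|G/M|$, and inflating a character furnished by the inductive hypothesis on $G/M$ settles the case (inflation preserves degree, nontriviality, and field of values). If $M$ is an elementary abelian $p$-group, then either $p\mid|G/M|$, handled again by inflation, or $M$ is a normal Sylow $p$-subgroup and $G=M\rtimes H$ with $H$ a $p'$-group; here a direct Clifford-theoretic construction should work, exploiting that \emph{every} linear character of $M$ already takes values in $\QQ(e^{2\pi i/p})$, so that a suitable irreducible character lying over a nontrivial such linear character has both $p'$-degree and the required field of values. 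The essential remaining configuration is $M=S_1\times\cdots\times S_t$, a direct power of a non-abelian simple group $S$ with $p\mid|S|$. Here the problem localizes to $S$: it suffices to produce a nontrivial $\psi\in\Irr_{p'}(S)$ with $\QQ(\psi)\subseteq\QQ(e^{2\pi i/p})$ that is moreover invariant under $\Aut(S)$, for such a $\psi$ yields an $\Aut(S)\wr\Sym_t$-invariant character of $M$ that can be extended across $G$. The technical nuisance here is that an extension is determined only up to a linear character of the quotient, and $\mathcal{H}$ may permute the possible extensions; one must single out a canonical, $\mathcal{H}$-stable extension (via character-triple and Gallagher arguments) so that the field-of-values property survives the passage from $S$ to $G$.

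The heart of the proof, and the main obstacle, is then the verification of this simple-group statement for every non-abelian simple $S$ with $p\mid|S|$, which forces an appeal to the classification. For $S=\Alt_n$ I would use the combinatorics of partitions: characters of $\Sym_n$ are rational, so the restriction to $\Alt_n$ of $\chi_\lambda$ for a non-self-conjugate $\lambda$ is rational-valued and $\Sym_n=\Aut(\Alt_n)$-invariant, and the hook-length formula lets one choose such a $\lambda\neq(n)$ with $p\nmid\chi_\lambda(1)$. Sporadic groups and their automorphism groups are dispatched by direct inspection of known character tables. The genuinely hard case is that of groups of Lie type. In the defining characteristic one works with the short, well-understood list of irreducible characters of $p'$-degree; in the non-defining characteristic one uses Deligne--Lusztig theory to locate suitable unipotent or semisimple characters of $p'$-degree and then must track \emph{simultaneously} the action of the field, diagonal, and graph automorphisms and that of the Galois group $\mathcal{H}$. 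Controlling fields of values and $\Aut(S)$-invariance at the same time is exactly where the difficulty concentrates: it rests on Lusztig's Jordan decomposition of characters together with explicit descriptions of how $\mathcal{H}$ acts on Lusztig series and on the characters within them. I expect essentially all of the real work to lie in this Lie-type analysis.
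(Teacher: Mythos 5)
You should first be aware that the paper you are working from contains no proof of this statement at all: it is Theorem~A of \cite{Navarro-Tiep1}, quoted and then used as a black box (for alternating groups in Proposition~\ref{prop:alternatingandsporadic}, and in Lemma~\ref{lem:Gross2} when $\pi\cap\pi(S)=\{\ell\}$). So your attempt can only be measured against the original Navarro--Tiep argument. At the level of architecture your sketch does mirror it: translate the field condition into invariance under $\mathcal{H}=\Gal\bigl(\QQ(\zeta_{|G|})/\QQ(\zeta_p)\bigr)$, induct over a minimal normal subgroup using Clifford theory and canonical extensions, and verify a statement about non-abelian simple groups via the classification. That part of the reformulation (including the normal Sylow $p$-subgroup case, where the canonical extension of a nontrivial linear character of $M$ is Galois-stable because the extension is unique) is sound.

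However, there are two genuine gaps. First, in the case $M=S_1\times\cdots\times S_t$ you claim that a nontrivial $\Aut(S)$-invariant $\psi\in\Irr_{p'}(S)$ with $\QQ(\psi)\subseteq\QQ(\zeta_p)$ ``yields an $\Aut(S)\wr\Sym_t$-invariant character of $M$ that can be extended across $G$.'' Invariance does not imply extendibility: the obstruction to extending a $G$-invariant character of $M$ lies in $\mathrm{H}^2(G/M,\CC^\times)$ and need not vanish. The standard repair --- and the one Navarro--Tiep actually implement --- is to prove the \emph{stronger} simple-group statement that $\psi$ extends to $\Aut(S)$; a known lemma then shows $\psi\times\cdots\times\psi$ extends to $\Aut(S)\wr\Sym_t$, hence to $G$ (using $\bC_G(M)=1$), after which rationality of a suitable extension is controlled. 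Your later remark about extensions being ``determined only up to a linear character'' addresses uniqueness of extensions, not their existence, so it does not close this gap. Second, the decisive step --- producing such a $\psi$ for every non-abelian simple $S$ with $p\mid |S|$ --- is not proved but explicitly deferred (``I expect essentially all of the real work to lie in this Lie-type analysis''). That verification \emph{is} the theorem: for $p=2$ it amounts to Gow's conjecture, itself the subject of the separate paper \cite{Navarro-Tiep2}, and your outline does not even register that $p=2$ requires a different and much harder treatment than odd $p$ (nor that $\Aut(\Al_6)\neq\Sy_6$, so $n=6$ needs separate handling in your alternating-group plan). As written, the proposal is a plausible programme, not a proof.
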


In~\cite{GHSV21}, together with A.\,A.~Schaeffer Fry and C.~Vallejo,
we attempted to extend this theorem from a single prime $p$ to a set
$\pi$ of two primes. Among other results, it was shown that if
$\pi=\{2,p\}$ and $\gcd(|G|,2p)>1$, then $G$ possesses a nontrivial
irreducible character of $\pi'$-degree whose values are contained in
$\QQ(e^{2\pi i/p})$. (Here, a character $\chi$ is said to have
$\pi'$-degree if $\chi(1)$ is not divisible by any prime in $\pi$.)
Unfortunately, this phenomenon does not hold in general when $\pi$
consists of two odd primes. For example, as pointed out in
\cite[Proposition~4.1]{GHSV21}, the Tits group ${}^2F_4(2)'$ has no
nontrivial irreducible character of $\{3,5\}'$-degree with values in
$\QQ(e^{2\pi i/15})$.

In this paper, we propose a different extension -- perhaps a more
natural one -- of Theorem~\ref{thm:Navarro-Tiep} that works for
an arbitrary set of odd primes. In the following, we write $\pi(G)$
for the set of prime divisors of $|G|$. Recall that a subgroup $H\le
G$ is called a Hall $\pi$-subgroup of $G$ if $\pi(H)\subseteq \pi$
and $|G:H|$ is not divisible by any prime in $\pi$.

\begin{theorema}\label{thm:main}
Let $\pi$ be a set of odd primes and $G$ a finite group possessing a
nontrivial Hall $\pi$-subgroup. Then $G$ has a nontrivial
$\pi'$-degree irreducible character with values in $\QQ(e^{2\pi
i/p})$ for some $p\in\pi$.
\end{theorema}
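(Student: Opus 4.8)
The plan is to argue by induction on $|G|$, taking $G$ to be a counterexample of minimal order with nontrivial Hall $\pi$-subgroup $H$. First I would run the standard inflation reduction: for any $1\ne N\nor G$, the image $HN/N$ is a Hall $\pi$-subgroup of $G/N$, and any character of $G/N$ of $\pi'$-degree with values in $\QQ(e^{2\pi i/p})$ inflates to one of $G$ with the same degree and field of values. Hence, were $HN/N\ne 1$, the inductive hypothesis applied to $G/N$ would already furnish the required character of $G$; so minimality forces $H\le N$ for every nontrivial normal subgroup $N$. Intersecting, $H$ lies in the intersection of all nontrivial normal subgroups, which is therefore nontrivial. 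This shows $G$ has a unique minimal normal subgroup $M$, with $H\le M$.

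Next comes the case split on $M$. If $M$ is abelian, say an elementary abelian $r$-group, then $H\le M$ forces $\pi\cap\pi(G)=\{r\}$, since any other prime of $\pi$ dividing $|G|$ would have to divide $|H|$, which is impossible. For such $G$ a $\pi'$-degree character is exactly an $r'$-degree character, and Theorem~\ref{thm:Navarro-Tiep} applied with $p=r$ produces the desired nontrivial character with values in $\QQ(e^{2\pi i/r})$, contradicting the choice of $G$. So the real content is the nonabelian case $M=S_1\times\cdots\times S_k$ with $S_i\cong S$ nonabelian simple. Here $\mathbf{C}_G(M)=1$ (otherwise it would contain $M$, forcing $Z(M)\ne 1$), so $M\le G\le\Aut(M)=\Aut(S)\wr\Sym_k$; moreover $H$ is a Hall $\pi$-subgroup of $M$, whence each factor $S$ carries a nontrivial Hall $\pi$-subgroup.

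The lifting strategy would be to locate $\theta\in\Irr(S)$ that is nontrivial, of $\pi'$-degree, $\Aut(S)$-invariant, extends to $\Aut(S)$, and has values in $\QQ(e^{2\pi i/p})$ for some $p\in\pi\cap\pi(S)$. Given such a $\theta$, the product $\theta\times\cdots\times\theta\in\Irr(M)$ is $G$-invariant and, because $\theta$ extends to $\Aut(S)$, extends to $\Aut(M)$; restricting that extension to $G$ yields $\chi\in\Irr(G)$ of degree $\theta(1)^k$ (a $\pi'$-number) whose values remain in the field generated by those of $\theta$ together with the rational values contributed by the $\Sym_k$-part, hence inside $\QQ(e^{2\pi i/p})$. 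For groups $S$ of Lie type whose defining characteristic does not lie in $\pi$, the Steinberg character $\St$ is the natural choice: it is rational, nontrivial, $\Aut(S)$-invariant, extends rationally, and has degree a power of the defining characteristic, hence $\pi'$-degree. This disposes of a large family of cases at once.

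The main obstacle is producing $\theta$ for the remaining simple groups — groups of Lie type in defining characteristic lying in $\pi$, alternating groups, and sporadic groups (including the Tits group singled out in the introduction). For these I would invoke the classification of finite simple groups together with the classification of simple groups admitting a nontrivial Hall $\pi$-subgroup for a set $\pi$ of odd primes, which severely restricts $S$, and then verify case by case the existence of a suitable $\theta$. The delicate points are simultaneously guaranteeing $\pi'$-degree (coprimality to \emph{every} prime of $\pi$, not just to one), choosing $p\in\pi$ so that the values land in the prescribed cyclotomic field $\QQ(e^{2\pi i/p})$, and controlling the field of values under extension to $\Aut(S)$ and thence to $G$. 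I expect this analysis — especially reconciling the $\pi'$-degree requirement with a cyclotomic field of values of the prescribed shape for a single well-chosen prime — to be where essentially all the difficulty lies.
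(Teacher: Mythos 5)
Your reduction is genuinely different from the paper's, and it has a gap at exactly the point you defer to ``case-by-case verification.'' The paper never passes to the socle: it fixes a normal subgroup $M$ with $G/M$ \emph{simple} and inducts top-down. If $G/M$ is non-abelian, the simple-group theorem (Theorem~\ref{thm:simple}) is applied to the quotient $G/M$ and the resulting character is merely \emph{inflated} to $G$, so no invariance or extendibility condition is ever needed; if $G/M$ is cyclic of order $2$ or of order lying in $\pi$, a linear character is inflated; and in the remaining case $|G/M|=r$ an odd prime outside $\pi$, induction is applied to $M$ and the character is lifted through the cyclic quotient by Clifford theory together with \cite[Lemma~2.2]{GHSV21}, which controls the field of values of a suitable constituent of $\psi^G$ when the index is an odd prime $r\neq p$. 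Your bottom-up route instead ends with $H\le M=S_1\times\cdots\times S_k$ and must \emph{extend} a character of $M$ up to $G\le\Aut(S)\wr\Sym_k$. For that you need, for every relevant simple group $S$, a nontrivial $\theta\in\Irr_{\pi'}(S)$ that is $\Aut(S)$-invariant and admits an extension to $\Aut(S)$ whose values still lie in $\QQ(\zeta_p)$. This is strictly stronger than Theorem~\ref{thm:simple}, it is not in the cited literature, and it is not routine: in the hardest family --- groups of Lie type in defining characteristic $\ell\in\pi$ --- the characters the paper actually produces are semisimple characters $\chi_s$ attached to elements $s\in T^*\cap S$ of order $p$, and such characters are in general \emph{not} $\Aut(S)$-invariant (field and graph automorphisms move the Lusztig series $\EC(G,s)$ to $\EC(G,s')$ for a non-conjugate $s'$, hence move $\chi_s$), let alone extendible with controlled values. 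So the crux of your plan requires proving a substantially stronger simple-group theorem for which you give no method; the Steinberg observation is correct but only covers $\ell\notin\pi$, mirroring Proposition~\ref{prop:ell-notin-pi}.

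The part of your argument that is complete is correct: the minimal-counterexample reduction forcing a unique minimal normal subgroup containing $H$, and the abelian-socle case via Theorem~\ref{thm:Navarro-Tiep}, are both fine, and the freedom to quotient by \emph{any} nontrivial normal subgroup not containing $H$ is a real structural advantage of your setup. But the non-abelian socle case, which is the heart of the matter, remains open in your proposal, whereas the paper's top-down induction reduces everything to the bare existence statement of Theorem~\ref{thm:simple} --- precisely the statement the bulk of the paper then proves.
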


Theorem~\ref{thm:main} may be viewed as a generalization of
Theorem~\ref{thm:Navarro-Tiep}. In fact, when $p$ is odd,
Theorem~\ref{thm:Navarro-Tiep} follows from the case $\pi=\{p\}$ of
Theorem~\ref{thm:main} together with the first Sylow theorem.

If we assume that $2\in \pi$, then the conclusion of
Theorem~\ref{thm:main} remains true when $|\pi|\leq 2$, as noted
above, but fails in general once $|\pi|\geq 3$. For example, if $G$
is any non-abelian simple group and $\pi=\pi(G)$, then the trivial
character $1_G$ is the only irreducible character of $G$ with
$\pi'$-degree. There are also counterexamples with $\pi\varsubsetneq
\pi(G)$, such as $(G,\pi)=(\Al_7,\{2,3,5\})$. At present, we do not
have a conceptual explanation for why the prime $2$ is special in
this context.

Similar to Gow's conjecture and the results in
\cite{Navarro-Tiep1,GHSV21}, Theorem~\ref{thm:main} admits a clean
reduction to finite simple groups. Accordingly, the main body of
this paper is devoted to proving the result for such groups. This
requires some new work on the relationship between the existence of
Hall $\pi$-subgroups and $\pi'$-degree irreducible characters in
simple groups of Lie type.

\section{Non-Abelian Simple Groups}

This section is devoted to proving Theorem~\ref{thm:main} for all
non-abelian simple groups. We first fix some notation. For a
positive integer $k$, we let $\zeta_k:=e^{2\pi i/k}$ and we write $\QQ(\zeta_k)$ for the $k$th
cyclotomic field. As usual, $\Irr(G)$ denotes
the set of all irreducible characters of a group $G$, and
$\Irr_{\pi'}(G)$ denotes the subset consisting of those characters
whose degrees are not divisible by any prime in $\pi$. For
$\chi\in\Irr(G)$, we write $\QQ(\chi)$ for the field of values of
$\chi$, that is, the smallest extension of $\QQ$ containing all
values of $\chi$. Finally, for integers $x\le y$, we let
$[x,y]:=\{\,z\in\ZZ \mid x\le z\le y\,\}$.

As mentioned above, we aim at proving the following.

\begin{theorem}\label{thm:simple}
Let $S$ be a non-abelian simple group and $\pi$ be a set of odd
primes such that $S$ has a Hall $\pi$-subgroup. Then there exists
$1_S\neq\chi\in\Irr_{\pi'}(S)$ such that
$\QQ(\chi)\subseteq\QQ(\zeta_p)$ for some $p\in\pi$.
\end{theorem}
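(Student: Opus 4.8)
The plan is to proceed by the classification of finite simple groups. First I would reduce to the case $\pi\subseteq\pi(S)$: a Hall $\pi$-subgroup of $S$ is the same thing as a Hall $(\pi\cap\pi(S))$-subgroup, and a value field $\QQ(\zeta_p)$ with $p\in\pi\cap\pi(S)$ already lies in the allowed range, so we may assume every prime of $\pi$ divides $|S|$ and that the Hall $\pi$-subgroup is nontrivial. I would then split the analysis into three families: the alternating groups, the sporadic groups together with the Tits group, and the simple groups of Lie type, with the last being the heart of the matter.

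For $S$ of Lie type in defining characteristic $\ell$, the crucial dichotomy is whether $\ell\in\pi$. Since $\pi$ consists of odd primes, every group defined over a field of characteristic $2$ automatically falls into the case $\ell\notin\pi$; more generally, whenever $\ell\notin\pi$ I would simply take the Steinberg character $\St$. Indeed $\St$ is nontrivial and rational-valued, so $\QQ(\St)=\QQ\subseteq\QQ(\zeta_p)$ for every $p\in\pi$, while its degree $\St(1)=|S|_\ell$ is a power of $\ell$ and hence a $\pi'$-number. This single observation disposes of all groups of Lie type outside their defining characteristic.

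The main obstacle is therefore $S$ of Lie type in odd defining characteristic $\ell$ with $\ell\in\pi$. Here the mere existence of a Hall $\pi$-subgroup $H$ is very restrictive: $H$ contains a full Sylow $\ell$-subgroup, i.e.\ a maximal unipotent subgroup, so $|S:H|$ is a $\pi'$-number assembled from cyclotomic factors $\Phi_d(q)$, and I would invoke the known classification of Hall subgroups of simple groups of Lie type (in the spirit of Gross and of Revin--Vdovin) to pin down the admissible pairs $(S,\pi)$, which typically force $H$ to be a Borel or parabolic subgroup and $S$ to have small rank. For each admissible configuration I would then produce the required character via Deligne--Lusztig theory: in defining characteristic the irreducible characters of $\ell'$-degree are exactly the semisimple characters, and one must locate one whose degree is coprime to the \emph{remaining} primes of $\pi$ as well and whose field of values lies in $\QQ(\zeta_\ell)$ (or in $\QQ$), using the available results on degrees and on fields of values of semisimple and unipotent characters. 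I expect the linear and unitary groups $\PSL_n(q)$ and $\PSU_n(q)$ to require the most delicate treatment --- one must track the restriction from $\SL_n$/$\SU_n$ to the simple quotient while controlling both the $\pi'$-degree condition and the value field --- and this should be the genuine technical core of the proof, corresponding to Proposition~\ref{prop:linear-unitary}.

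For the alternating groups $A_n$, the constraint $2\notin\pi$ makes Hall $\pi$-subgroups extremely rare: the full $2$-part of $|A_n|$ must divide $|A_n:H|$, so the classification of Hall subgroups of $A_n$ and $\Sy_n$ leaves only a short list of pairs $(A_n,\pi)$, and in each I would exhibit a nontrivial rational-valued character of $\pi'$-degree, which alternating groups possess in abundance. Finally, the sporadic groups and the Tits group are handled by direct inspection of their character tables together with the known maximal- and Hall-subgroup data. The decisive and by far most laborious step is the defining-characteristic Lie type analysis described above.
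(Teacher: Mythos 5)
Your skeleton coincides with the paper's (CFSG split; Steinberg character when $\ell\notin\pi$; semisimple characters when $\ell\in\pi$; linear/unitary groups as the technical core), but the central case --- Lie type in odd defining characteristic with $\ell\in\pi$ --- rests on a premise that is false, so there is a genuine gap rather than a different route. You propose to ``pin down the admissible pairs $(S,\pi)$,'' claiming these ``typically force $H$ to be a Borel or parabolic subgroup and $S$ to have small rank.'' Admissible pairs occur in \emph{every} rank: for $\GL_n(q)$ one has $|G:B|=\prod_{i=2}^{n}(1+q+\cdots+q^{i-1})$, and if $p$ is an odd prime with $p\mid q-1$ and $p>n$, then each factor is $\equiv i \pmod p$, so $|G:B|$ is a $\{\ell,p\}'$-number and the solvable Borel subgroup contains a Hall $\{\ell,p\}$-subgroup; this yields Hall $\pi$-subgroups of $\PSL_n(q)$ for all $n$. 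So there is no short list to enumerate, and a configuration-by-configuration analysis cannot be completed. What the paper extracts from Gross \cite{G86} is not a classification but one structural fact (Lemma~\ref{lem:Gross}): any Hall $\pi$-subgroup with $\ell\in\pi$ odd lies in a Borel subgroup, whence, passing to the dual group, $|G^*:B^*|$ is a $\pi'$-number. Everything afterwards is a single uniform construction, valid in all ranks.

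Your second misstep is the target field. You ask for a semisimple character with values in $\QQ(\zeta_\ell)$ or $\QQ$; but a semisimple character $\chi_s$ attached to $s\in G^*$ satisfies $\QQ(\chi_s)\subseteq\QQ(\zeta_{\ord(s)})$ with $\ord(s)$ coprime to $\ell$, so demanding values in $\QQ(\zeta_\ell)$ amounts to demanding rationality (or to controlling Gauss-sum irrationalities, as in the $\{2,p\}$ analysis of \cite{GHSV21}), which is both unnecessary and much harder. The statement only requires $\QQ(\zeta_p)$ for \emph{some} $p\in\pi$, and the paper exploits exactly this freedom: by Lemma~\ref{lem:Gross2} there is a prime $p\in\pi\cap\pi(T^*)$ (with a further argument, $p\in\pi\cap\pi(T^*\cap S)$); choosing $s\in T^*\cap S$ of order $p$, the semisimple character $\chi_s$ has degree dividing $|G^*:T^*|_{\ell'}=|G^*:B^*|$, hence $\pi'$-degree, has values in $\QQ(\zeta_p)$ by Lemma~\ref{lem:1}, and is trivial on $\bZ(G)$ because $s\in S=[G^*,G^*]$. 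Your sketch contains neither the choice of $p$ inside $\pi(T^*)$ nor this degree-divides-$|G^*:B^*|$ mechanism, and it leaves untouched the connectedness of $\bC_{\bG^*}(s)$, which is what actually forces the separate treatments in the paper (type $A$ in Proposition~\ref{prop:linear-unitary}, and $E_6^{\pm}(q)$ with $p=3$). Two smaller points: for the alternating groups the ``short list'' is precisely $|\pi|=1$, and that case is not handled by exhibiting rational characters ad hoc but by citing Theorem~\ref{thm:Navarro-Tiep}; the same citation is needed for Lie type groups when $\pi\cap\pi(S)=\{\ell\}$, a subcase your plan does not isolate.
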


Hall $\pi$-subgroups for a set $\pi$ of odd primes are relatively
common in simple groups of Lie type, but are much more restricted in
alternating and sporadic groups. We begin by treating the
alternating and sporadic cases.

\begin{lemma}\label{lem:noHall}
Let $n\in\mathbb{N}_{\geq 5}$ and let $\pi\subseteq [1,n]$ be a set
of prime numbers. Assume that $2\notin\pi$. Then the alternating
group $\Al_n$ admits a Hall $\pi$-subgroup if and only if $|\pi|=1$.
\end{lemma}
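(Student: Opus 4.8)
The plan is to work throughout in $\Sy_n$ rather than $\Al_n$. Since $2\notin\pi$, any Hall $\pi$-subgroup $H$ has odd order, and every odd-order subgroup of $\Sy_n$ lies in $\Al_n$ (an element of odd order is a product of odd-length cycles, hence even); moreover $|\Sy_n|_\pi=|\Al_n|_\pi$ because $[\Sy_n:\Al_n]=2$. Thus $H$ is a Hall $\pi$-subgroup of $\Al_n$ if and only if it is one of $\Sy_n$, and I pass freely between the two. The backward implication is immediate: if $\pi=\{p\}$ then a Hall $\pi$-subgroup is a Sylow $p$-subgroup, which exists by Sylow's theorem. For the forward implication I argue by contraposition: assuming $|\pi|\ge 2$, I fix two primes $p\neq q$ in $\pi$ and suppose for contradiction that $H$ is a Hall $\pi$-subgroup of $\Sy_n$. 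Writing $v_r$ for the $r$-adic valuation and letting $\Omega_1,\dots,\Omega_m$ be the $H$-orbits with $n_i:=|\Omega_i|$ and transitive constituents $T_i:=H^{\Omega_i}\le \Sy_{n_i}$, the diagonal embedding $H\hookrightarrow\prod_i T_i$ gives, for each $r\in\pi$,
\[ v_r(n!)=v_r(|H|)\le\sum_i v_r(|T_i|)\le\sum_i v_r(n_i!)\le v_r(n!), \]
where the last inequality is Legendre's (the multinomial coefficient is an integer). Hence equality holds throughout, forcing $v_r(|T_i|)=v_r(n_i!)$ for every $i$; that is, each $T_i$ contains a full Sylow $r$-subgroup of $\Sy_{n_i}$, for every $r\in\pi$. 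Since $|H|=|\Sy_n|_\pi>1$, some orbit has $n_i>1$, and its constituent $T_i$ is a transitive group of odd order simultaneously containing full Sylow $p$- and Sylow $q$-subgroups of $\Sy_{n_i}$. The whole proof then reduces to ruling this out.

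The crux is therefore the following claim, which I would prove by induction on the degree $d$: if $T\le\Sy_d$ is a transitive group of odd order with $d>1$, then $T$ contains a full Sylow $s$-subgroup of $\Sy_d$ for at most one prime $s$. (Odd-order groups are solvable by Feit--Thompson, a fact I use only to describe the primitive case; note also that transitivity forces $d\mid|T|$, so $d$ is odd.) For the inductive step, suppose $T$ is imprimitive with $b$ blocks of size $e$, where $1<e<d$. Let $\bar T\le\Sy_b$ be the action on blocks and $T^\ast\le\Sy_e$ a block constituent; both are transitive of odd order and of degree strictly between $1$ and $d$. With $K$ the kernel of the block action one has $v_s(|T|)=v_s(|K|)+v_s(|\bar T|)\le b\,v_s(|T^\ast|)+v_s(|\bar T|)$, while $b\,v_s(e!)+v_s(b!)=v_s(|\Sy_e\wr\Sy_b|)\le v_s(d!)$. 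If $T$ contained full Sylow $p$- and $q$-subgroups then, chaining these inequalities with $v_s(|T^\ast|)\le v_s(e!)$ and $v_s(|\bar T|)\le v_s(b!)$ for $s\in\{p,q\}$, equality would be forced everywhere, so $T^\ast$ and $\bar T$ would each contain full Sylow $p$- and $q$-subgroups of $\Sy_e$ and $\Sy_b$ respectively---contradicting the inductive hypothesis.

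It remains to settle the primitive base case, which I expect to be the main obstacle. A solvable primitive group is affine, so $d=r^k$ with $r$ an (odd) prime and $T\le\mathrm{AGL}_k(r)=\FF_r^k\rtimes \GL_k(r)$. For $s=r$ with $k\ge 2$ one has $v_r(|T|)\le k+\binom{k}{2}<\tfrac{r^k-1}{r-1}=v_r((r^k)!)$ using $r$ odd, so $r$ fails; for $k=1$ the degree is the prime $r$, and for odd $s\neq r$ dividing $|T|$ one has $s\mid r-1$, whence $v_s(|T|)\le v_s(r-1)<v_s((r-1)!)=v_s(r!)$, so again only $s=r$ survives. For $s\neq r$ with $k\ge2$, the key observation is a fixed-point count: an element of $\mathrm{AGL}_k(r)$ of order $s$ has the form $(v,A)$ with $A\neq I$ (pure translations have order $r$), so it fixes at most $r^{k-1}$ of the $r^k$ points. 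A full Sylow $s$-subgroup of $\Sy_{r^k}$ contains a single $s$-cycle (any $s$-element lies, up to conjugacy, in a given Sylow subgroup), and such a cycle fixes $r^k-s$ points; when $s\le r^k/2$ this exceeds $r^{k-1}$, so no single $s$-cycle lies in $\mathrm{AGL}_k(r)$ and $s$ fails. Finally, any prime $s>r^k/2$ dividing $|T|$ must divide $r^k-1$ (it divides neither $r$ nor any $r^i-1$ with $i<k$), and $r^k-1$ has at most one prime factor exceeding its square root; thus at most one prime can survive. Assembling the primitive and imprimitive cases completes the induction, and with it the lemma.
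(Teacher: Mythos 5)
Your overall strategy is sound and genuinely different from the paper's. The paper's proof is short and citation-based: Feit--Thompson makes the Hall $\pi$-subgroup $H$ solvable, P.~Hall's 1928 theorem on solvable groups extracts a Hall $\{p_1,p_2\}$-subgroup $K\le H$, and P.~Hall's Theorem~A4 from \emph{Theorems like Sylow's} (solvable Hall subgroups of $\Sy_n$ have even order) then gives the contradiction $2\in\pi$. You instead reprove the needed special case of that last theorem from scratch: reduction to a transitive constituent via orbit counting and Legendre's formula, reduction to the primitive case via block systems, and elimination of the primitive case (affine, by Feit--Thompson plus the structure of solvable primitive groups) by valuation bounds and fixed-point counting. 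Your route is longer but self-contained apart from Feit--Thompson; the analysis of $\mathrm{AGL}_k(r)$ is correct as far as I can check, including the bound $k+\binom{k}{2}<\frac{r^k-1}{r-1}$ for odd $r$, the fixed-point count against a single $s$-cycle, and the observation that $r^k-1$ has at most one prime factor exceeding $\sqrt{r^k-1}$.

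There is, however, a recurring gap in how your key claim is stated and invoked. As written --- ``$T$ contains a full Sylow $s$-subgroup of $\Sy_d$ for at most one prime $s$'' --- the claim is false, since for every prime $s>d$ the Sylow $s$-subgroups of $\Sy_d$ are trivial and hence ``contained'' in $T$; so the claim must be restricted to primes $s\le d$, i.e.\ to nontrivial Sylow subgroups. But under that reading, your two reductions do not deliver its hypotheses. In the initial reduction you take \emph{any} orbit with $n_i>1$; if $n_i<p$ (or $n_i<q$) then $v_p(n_i!)=0$, so $T_i$ ``contains a full Sylow $p$-subgroup'' only vacuously and no contradiction arises. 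The same issue recurs in the imprimitive step: if $e<p$, the forced equality $v_p(|T^\ast|)=v_p(e!)=0$ does not contradict the inductive hypothesis for $T^\ast$. The fix is easy but must be said: order $p<q$ and exploit the equality for the \emph{larger} prime. Since $v_q(|H|)=v_q(n!)>0$ and $v_q(|H|)\le\sum_i v_q(n_i!)$, some orbit has $n_i\ge q>p$, and for that orbit both Sylow subgroups are nontrivially full. Likewise, in the inductive step the equality $b\,v_q(e!)+v_q(b!)=v_q(d!)>0$ forces $q\le e$ or $q\le b$, and then $p<q$ places both primes nontrivially into $T^\ast$ or into $\bar T$, contradicting the inductive hypothesis. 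With this bookkeeping inserted, your proof is complete.
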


\begin{proof}
If $|\pi|=1$ the statement is obviously implied by the Sylow theory.
For the other implication, assume that $\Al_n$ admits a Hall
$\pi$-subgroup $H$ and suppose for a contradiction that $|\pi|\geq
2$. In particular let $\pi=\{p_1,p_2,\ldots, p_t\}$ for some odd
primes $p_1<p_2<\cdots<p_t$. 
Notice that $H$ is solvable, by the Feit--Thompson theorem
\cite{Thompson63}. By \cite{Hall1} we then know that $H$ admits a
$\{p_1,p_2\}$-Hall subgroup $K$. It follows that $K$ is a solvable
$\{p_1,p_2\}$-Hall subgroup of $\Al_n$. Moreover, since $2\notin
\{p_1,p_2\}$ we also have that $K$ is a solvable $\{p_1,p_2\}$-Hall
subgroup of the symmetric group $\Sy_n$. Using \cite[Theorem
A4]{Hall2}, we deduce that $|K|$ is even, and therefore $2\in
\{p_1,p_2\}\subseteq \pi$. This clearly contradicts our hypothesis.
%
%
\end{proof}

We note that the use of the Feit--Thompson theorem can be avoided by
instead relying on the known classification of non-solvable Hall
subgroups of symmetric groups. Suppose that $H$ is a non-solvable
Hall $\pi$-subgroup of $\Al_n$. Since $2 \notin \pi$, it follows
that $H$ is also a non-solvable Hall $\pi$-subgroup of $\Sy_n$. This
family of subgroups of symmetric groups is completely described in
\cite{Thompson}. In particular, either $H = \Sy_n$ or $n$ is a prime
number and $H = \Sy_{n-1}$. In both cases, we would have $2 \in
\pi$, which again contradicts our hypothesis.


\begin{proposition}\label{prop:alternatingandsporadic}
Theorem~\ref{thm:simple} holds when $S$ is an alternating group, a
sporadic group, or the Tits group ${}^2F_4(2)'$.
\end{proposition}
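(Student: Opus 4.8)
The plan is to split the argument by the size of $\pi_0:=\pi\cap\pi(S)$, the set of primes of $\pi$ that actually divide $|S|$, and to exploit two simplifications throughout. First, a Hall $\pi$-subgroup of $S$ is the same as a Hall $\pi_0$-subgroup, and since $\chi(1)$ divides $|S|$, a character has $\pi'$-degree precisely when its degree avoids every prime of $\pi_0$; so we may replace $\pi$ by $\pi_0$ and assume $\pi=\pi_0\subseteq\pi(S)$. (If $\pi_0=\emptyset$ the Hall subgroup is trivial, every irreducible character has $\pi'$-degree, and any nontrivial rational-valued character of $S$ works, since $\QQ\subseteq\QQ(\zeta_p)$ for a chosen $p\in\pi$; all the groups in question have such a character, so we may assume $\pi_0\neq\emptyset$.) Second, the field condition is automatic for rational-valued characters; in particular a character that is the unique irreducible of its degree is Galois-fixed, hence rational, and so satisfies the conclusion as soon as its degree is a $\pi'$-number. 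Finally, when $|\pi_0|=1$, say $\pi_0=\{p\}$ with $p$ an odd prime dividing $|S|$, the statement is exactly Theorem~\ref{thm:Navarro-Tiep} applied to $S$: it yields a nontrivial $\chi\in\Irr(S)$ of $p'$-degree (equivalently, of $\pi'$-degree) with $\QQ(\chi)\subseteq\QQ(\zeta_p)$. Hence all the content lies in the cases $|\pi_0|\ge 2$.

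For alternating groups there are no such cases. If $S=\Al_n$ with $n\ge 5$, then $\pi_0$ is a set of odd primes contained in $[1,n]$ for which $S$ has a Hall $\pi_0$-subgroup, and Lemma~\ref{lem:noHall} forces $|\pi_0|=1$. Thus every alternating group is disposed of by the single-prime reduction above.

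The substantive case is $S$ sporadic or $S={}^2F_4(2)'$ with $|\pi_0|\ge 2$, where a multi-prime odd Hall subgroup can genuinely occur: for example $J_1$, of order $2^3\cdot 3\cdot 5\cdot 7\cdot 11$, contains a cyclic subgroup of order $15$, which is a Hall $\{3,5\}$-subgroup. The plan is to enumerate, using the known classification of Hall subgroups of the sporadic groups together with their lists of maximal subgroups and element orders, the finitely many pairs $(S,\pi_0)$ with $\pi_0$ a set of at least two odd primes admitting a Hall $\pi_0$-subgroup, and then to verify the conclusion for each by inspecting the ordinary character table of $S$. In practice one exhibits a nontrivial rational-valued irreducible character of degree coprime to $\prod_{p\in\pi_0}p$; most often this is the unique irreducible character of some degree, which is therefore rational at no cost. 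In the $J_1$ example the character of degree $209=11\cdot 19$ is the only one of its degree, hence rational, and $209$ is coprime to $15$, so it does the job. For the Tits group ${}^2F_4(2)'$, whose odd part is $3^3\cdot 5^2\cdot 13$, a direct check against the maximal subgroups shows that no Hall $\pi_0$-subgroup exists once $|\pi_0|\ge 2$; this is consistent with the example recalled in the introduction from \cite{GHSV21}, which concerns the field $\QQ(\zeta_{15})$ and imposes no Hall hypothesis, so only the single-prime reduction is needed there.

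The main obstacle I anticipate is neither the character-theoretic verification, which is a finite check against the ATLAS data, nor the alternating case, which is immediate from Lemma~\ref{lem:noHall}; rather, it is confirming the completeness of the list of pairs $(S,\pi_0)$ with $|\pi_0|\ge 2$ that admit a Hall $\pi_0$-subgroup, across all sporadic groups and the Tits group. This step leans on the external classification of Hall subgroups of sporadic groups, and some care is needed to ensure that every such configuration has been accounted for before the individual character checks are carried out.
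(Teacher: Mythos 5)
Your treatment of the alternating groups coincides with the paper's: Lemma~\ref{lem:noHall} forces $|\pi\cap\pi(S)|\le 1$, and Theorem~\ref{thm:Navarro-Tiep} finishes. Your preliminary reductions (replacing $\pi$ by $\pi_0=\pi\cap\pi(S)$, and handling $|\pi_0|\le 1$ via Theorem~\ref{thm:Navarro-Tiep}) are likewise sound.

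The gap is in the sporadic/Tits case: you reduce everything to a finite enumeration-plus-verification that you never carry out, and you yourself flag the enumeration as the obstacle. Two ingredients are missing concretely. First, the completeness of the list of pairs $(S,\pi_0)$ with $|\pi_0|\ge 2$ admitting a Hall $\pi_0$-subgroup: you say you would lean on an external classification but do not pin one down. The paper settles this with a single citation: by \cite[Theorem~6.14]{G86}, a sporadic group or the Tits group having an odd-order Hall $\pi$-subgroup forces $|\pi|\le 2$ (the full classification is also available in \cite{RV10}). Without such an input your enumeration is open-ended, and it is genuinely nonempty: besides $(J_1,\{3,5\})$ there are, for instance, $(M_{11},\{5,11\})$, $(M_{22},\{5,11\})$, $(M_{23},\{11,23\})$, and further sets for $J_1$ such as $\{3,7\}$, $\{3,19\}$, $\{5,11\}$. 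Second, the per-case character verification: you work out only $J_1$ with $\{3,5\}$. The paper outsources all two-prime cases to \cite[Theorem~2.1]{GHSV21}, whose only possible exceptions $({}^2F_4(2)',\{3,5\})$, $(J_4,\{23,43\})$, $(J_4,\{29,43\})$ are then eliminated by checking against \cite{Atl1} that no Hall subgroup exists in those cases -- which also disposes of your (unproved, but correct) claim that the Tits group has no odd Hall subgroup with $|\pi_0|\ge 2$. So your strategy would work and has the same finite-check structure as the paper, with direct ATLAS character checks in place of citing \cite{GHSV21}; but as written it is a plan with one worked example rather than a proof. (A minor slip: $|J_1|=2^3\cdot 3\cdot 5\cdot 7\cdot 11\cdot 19$, not $2^3\cdot 3\cdot 5\cdot 7\cdot 11$; your degree-$209=11\cdot 19$ character argument is nevertheless correct.)
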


\begin{proof}
The case of alternating groups follows from Lemma~\ref{lem:noHall}
and Theorem~\ref{thm:Navarro-Tiep}. We now consider the sporadic
groups and the Tits group. By \cite[Theorem~6.14]{G86}, if $S$ has a
Hall $\pi$-subgroup for a set $\pi$ of odd primes, then $|\pi| \le
2$. The result then follows from \cite[Theorem~2.1]{GHSV21}, except
possibly in the cases $(S,\pi)=({}^2F_4(2)',\{3,5\})$, $(J_4,
\{23,43\})$, or $(J_4,\{29,43\})$. However, a direct check using
\cite{Atl1} shows that in each of these cases, $S$ does not have a
Hall $\pi$-subgroup.
\end{proof}


\begin{remark}
The hypothesis $2\notin \pi$ in
Proposition~\ref{prop:alternatingandsporadic} can not be removed.
Indeed, as already pointed out in the introduction, $\Al_7$
possesses a Hall $\{2,3,5\}$-subgroup but the only
$\{2,3,5\}'$-degree irreducible character of $\Al_7$ is the trivial
character. This observation can be extended to any prime larger than
$7$. Let $p\geq 11$ be a fixed prime number, and let $\pi$ be the
set consisting of all prime numbers strictly smaller than $p$. In
this case $\Al_{p-1}$ is a Hall $\pi$-subgroup of $\Al_p$ but
$\mathrm{Irr}_{\pi'}(\Al_p)=\{1_{\Al_p}\}$. In fact any non-trivial
irreducible character $\chi$ of $\pi'$-degree of $\Al_p$ would
satisfy $\chi(1)=p$. This is not possible because for any
$\zeta\in\mathrm{Irr}(\Sy_p)$ we have either $\zeta(1)\leq p-1$ or
$\zeta(1)\geq \frac{p(p-3)}{2}$, by \cite{Rasala}. Since $p-3> 4$ we
have that for any $\theta\in\mathrm{Irr}(\Al_p)$, either
$\theta(1)\leq p-1$ or $\theta(1)>p$.
\end{remark}

\smallskip

We now turn to the proof of Theorem~\ref{thm:simple} for simple
groups of Lie type $S \neq {}^2F_4(2)'$. By this, we mean simple
groups of the form $S = G/\bZ(G)$, where $G := \bG^F$ is the group
of fixed points of a simple, simply connected algebraic group $\bG$
defined over an algebraically closed field of characteristic $\ell$,
under a Steinberg endomorphism $F$ of $\bG$.

The case $\ell\notin\pi$ is easy.

\begin{proposition}\label{prop:ell-notin-pi}
Theorem \ref{thm:simple} holds when $S$ is a simple group of Lie
type in characteristic $\ell$ and $\ell\notin\pi$.
\end{proposition}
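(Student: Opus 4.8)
The plan is to exhibit the Steinberg character as the required character. Write $\St$ for the Steinberg character of $G=\bG^F$. It is an irreducible character of degree $\St(1)=|G|_\ell=q^N$, where $q$ is the defining field size of $G$ and $N$ is the number of positive roots of $\bG$. Since $\bG$ is a nontrivial simple algebraic group we have $N\geq 1$ and $q\geq 2$, so $\St(1)=q^N>1$ and $\St$ is nontrivial. Crucially, $q^N$ is a power of $\ell$, and because $\ell\notin\pi$ the degree $\St(1)$ is a $\pi'$-number; hence $\St$ has $\pi'$-degree.

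Next I would invoke two standard facts about $\St$. First, its values are integers (it vanishes off the semisimple elements and takes the value $\pm|C_G(s)|_\ell$ at a semisimple $s$), so $\QQ(\St)=\QQ$; as $\QQ\subseteq\QQ(\zeta_p)$ for every prime $p$ and $\pi\neq\emptyset$ (a nontrivial Hall $\pi$-subgroup exists), any choice of $p\in\pi$ satisfies $\QQ(\St)\subseteq\QQ(\zeta_p)$. Second, $\St$ is a unipotent character and therefore has trivial central character; equivalently, evaluating the induced-character formula $\St=\sum_{J}(-1)^{|J|}\Ind_{P_J}^{G}1$ at a central element $z$ gives $\St(z)=\St(1)$, so $\St$ is trivial on $\bZ(G)$. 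Thus $\St$ factors through $S=G/\bZ(G)$ and yields a nontrivial irreducible character of $S$, which we again denote by $\St$. Putting these together, $\St\in\Irr_{\pi'}(S)$ is nontrivial with $\QQ(\St)\subseteq\QQ(\zeta_p)$, as required.

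I do not anticipate any genuine obstacle in this case; it is precisely the ``easy'' regime flagged in the text. The only ingredients needing justification are the integrality of $\St$ and the vanishing of its central character, both of which are classical properties of the Steinberg representation of a finite group of Lie type. The substantive difficulties will instead arise in the complementary case $\ell\in\pi$, where $\St$ has degree divisible by a prime of $\pi$ and a different source of $\pi'$-degree characters with controlled fields of values must be found.
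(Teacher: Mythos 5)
Your proposal is correct and takes exactly the same route as the paper: the paper's proof consists of observing that the Steinberg character $\St_G$ is trivial on $\bZ(G)$, rational-valued, and of degree a power of $\ell$ (citing \cite[Proposition~3.4.10]{GeckMalle}), which is precisely your argument. You simply spell out the standard justifications (integrality of the values, vanishing of the central character) that the paper delegates to the reference.
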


\begin{proof}
The Steinberg character $\St_G$ of $G$ is trivial on $\bZ(G)$ and
thus can be viewed as a character of $S$. Moreover, $\St_G$ is
rational-valued and has degree a power of $\ell$; see
\cite[Proposition~3.4.10]{GeckMalle}.
\end{proof}

We therefore focus on the case $\ell\in\pi$. Let $\bB$ be an
$F$-stable Borel subgroup of $\bG$ and $\bT$ a (maximally-split)
$F$-stable maximal torus of $\bG$ inside $\bB$. Let $\Phi$ be the
root system of $\bG$ with respect to $\bT$ and $\bB$ and $\Phi^+$
the set of corresponding positive roots. Let $\bU$ be the product of
the root subgroups corresponding to the roots in $\Phi^+$. This
$\bU$ is indeed the unipotent radical of $\bB$. We have
$U:=\bU^F\in\Syl_\ell(G)$ and $B=UT$, where $U:=\bU^F$ and
$T:=\bT^F$; see \cite[\S1.9 and \S2.9]{Carter85}.

Let $(\bG^*,F^*)$ be the pair dual to $(\bG,F)$, and set
$G^*:={\bG^*}^{F^*}$. Note that $S=[G^*,G^*]$, as $S$ is simple. Let
$\bT^*$ be an $F^*$-stable maximal torus of $\bG^*$ that is dual to
$\bT$ in the sense of \cite[Proposition~4.3.1]{Carter85}. Let
$\bB^*$ be an $F^*$-stable Borel subgroup of $\bG^*$ containing
$\bT^*$. Write $T^*:={\bT^*}^{F^*}$ and $B^*:={\bB^*}^{F^*}$. If
$\bU^*$ denotes the unipotent radical of $\bB^*$ and
$U^*:={\bU^*}^{F^*}$, then $B^*=U^*T^*$.

\begin{lemma}\label{lem:Gross}
Let $S$ be a simple group of Lie type in characteristic $\ell$.
Suppose that $\pi$ is a set of odd primes containing $\ell$ (in
particular, $\ell$ is odd) and that $S$ has a Hall $\pi$-subgroup.
Then $B^*$ contains a Hall $\pi$-subgroup of $G^*$.
\end{lemma}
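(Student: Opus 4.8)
The plan is to reduce the statement to a purely numerical fact about the index $|G^*:B^*|$, and then to extract that fact from the existence of a Hall $\pi$-subgroup of $S$ by a geometric argument carried out inside $S$ itself. First I would record the reduction. Since $U^*$ is nilpotent and $T^*$ is abelian, $B^*=U^*T^*$ is solvable, so by P.\ Hall's theorem $B^*$ possesses a Hall $\pi$-subgroup, necessarily of order $|B^*|_\pi$; this subgroup is a Hall $\pi$-subgroup of $G^*$ exactly when $|B^*|_\pi=|G^*|_\pi$, that is, when $|G^*:B^*|$ is a $\pi'$-number. Now $|G^*|=|G|$ and, because dual maximal tori have equal order, $|U^*|=|U|=|G|_\ell$ and $|T^*|=|T|$, whence $|B^*|=|U^*|\,|T^*|=|U|\,|T|=|B|$ and so $|G^*:B^*|=|G:B|$. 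As $\bZ(G)\le \bT^F\le B$, writing $\bar B$ for the image of $B$ in $S=G/\bZ(G)$ — a Borel subgroup of $S$ — one has $|S:\bar B|=|G:B|$. It therefore suffices to prove that $|S:\bar B|$ is a $\pi'$-number.

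To produce this, I would start from a Hall $\pi$-subgroup $\bar H$ of $S$. Since $\ell\in\pi$, $\bar H$ contains a Sylow $\ell$-subgroup of $S$, and after conjugating I may assume $\bar U\le\bar H$, where $\bar U$ is the image of $U$: a Sylow $\ell$-subgroup of $S$ and the unipotent radical of $\bar B$. Crucially, because $\pi$ consists of odd primes, $|\bar H|$ is odd. The heart of the matter is the claim that $\bar H\le\bar B$; granting it, $|S|_\pi=|\bar H|\le|\bar B|_\pi\le|S|_\pi$ forces $|\bar B|_\pi=|S|_\pi$, i.e.\ $|S:\bar B|_\pi=1$, which is what we want.

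To establish $\bar H\le\bar B$, I would first observe that $\bar B$ is the unique Borel subgroup of $S$ containing $\bar U$: any such Borel has $\bar U$ as a Sylow $\ell$-subgroup, but its own unipotent radical is its unique Sylow $\ell$-subgroup, so the two coincide, the Borel normalizes $\bar U$, and hence equals $\Normalizer_S(\bar U)=\bar B$. Consequently it is enough to show that $\bar H$ normalizes $\bar U$, equivalently that $\bar U=\bO_\ell(\bar H)$. Here I would argue by induction on the semisimple rank via the Borel--Tits theorem: if $\bO_\ell(\bar H)\ne 1$, then $\bar H\le\Normalizer_S(\bO_\ell(\bar H))$ lies in a proper parabolic subgroup $\bar P$, inside which $\bar U$ remains a Sylow $\ell$-subgroup and $\bar H$ remains an odd overgroup of it; since every Borel subgroup of $\bar P$ is a Borel subgroup of $S$, the inductive hypothesis applied to $\bar P$ places $\bar H$ in a Borel of $S$, which must be $\bar B$.

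The main obstacle is the residual case $\bO_\ell(\bar H)=1$, where the Borel--Tits theorem supplies no proper parabolic and the induction stalls; this is precisely where the oddness of $\pi$ is indispensable, since for even $\pi$ one has the genuine overgroup $\bar H=S$ (taking $\pi=\pi(S)$) with $\bO_\ell(\bar H)=1$. In this case $\bar U$ is not normal in $\bar H$, so $\bar H$ contains the unipotent radicals $\bar U=\bar U_1,\bar U_2,\dots$ of several distinct Borels, transitively permuted by $\bar H$. I would try to locate among these a pair of \emph{opposite} Borels, whose unipotent radicals generate $S$ — a group of even order — contradicting that $|\bar H|$ is odd, while confining all of them to a common proper parabolic in the remaining configurations so as to descend. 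Since this dichotomy is the delicate point, I would, as an alternative, invoke the known classification of odd-order Hall subgroups of groups of Lie type (in the spirit of Gross, cf.\ \cite{G86}), from which the containment of $\bar H$ in a Borel subgroup follows directly.
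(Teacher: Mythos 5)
Your numerical reduction is exactly the paper's: by duality $|G^*|=|G|$ and $|T^*|=|T|$ (Carter, Cor.~4.4.2 and Prop.~4.4.4), hence $|U^*|=|U|$, $|B^*|=|B|$, and $|G^*:B^*|=|G:B|=|S:\bar B|$ since $\bZ(G)\le T\le B$; solvability of $B^*$ plus Hall's theorem then reduces the lemma to showing that $|S:\bar B|$ is a $\pi'$-number. The divergence is in how that last fact is obtained. The paper gets it in one line by quoting Gross \cite[Theorem~3.2]{G86}: under the hypotheses ($\ell\in\pi$, $2\notin\pi$, $S$ an $E_\pi$-group), a Borel subgroup of $S$ already contains a Hall $\pi$-subgroup of $S$. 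Your primary argument tries to reprove this containment from scratch, and it has a genuine gap precisely at its core. In the residual case $\bO_\ell(\bar H)=1$ you only say you ``would try to locate'' a pair of opposite Borels among the $\bar H$-conjugates of $\bar U$, or else ``confine'' all of them to a common proper parabolic so as to descend; neither alternative is established, and there is no soft argument for this dichotomy --- it is the actual mathematical content of Gross's theorem, proved there by a substantial analysis, not a formal consequence of Borel--Tits. The inductive step is also looser than it appears: Borel--Tits places $\bar H$ inside a proper parabolic $\bar P$, but $\bar P$ is not a simple group of Lie type, so ``the inductive hypothesis applied to $\bar P$'' requires restating the claim for Levi quotients $\bar P/R_u(\bar P)$ and checking that oddness, the Sylow containment, and the condition $\bO_\ell=1$ behave correctly under that quotient; as written this is asserted rather than proved.

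Your declared fallback --- invoking the known description of odd-order Hall subgroups of groups of Lie type from \cite{G86} --- is exactly the citation the paper uses, so with that substitution your proof is correct and coincides with the paper's; the only cosmetic difference is that the paper applies Gross's theorem to $S$ first and then transfers the resulting index condition to the dual side, whereas you pass to the dual side first and invoke Gross last. The practical advice: keep the reduction, accept the fallback, and drop the Borel--Tits sketch unless you intend to reprove Gross's theorem, which is a far harder task than this lemma.
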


\begin{proof}
Note that $\bZ(G)\le T\le B$. By \cite[Theorem~3.2]{G86}, the
quotient $B/\bZ(G)$, which may be regarded as a Borel subgroup of
$S$, contains a Hall $\pi$-subgroup of $S$. Equivalently, $|G:B| =
|S : B/\bZ(G)|$ is a $\pi'$-number. By Corollary~4.4.2 and
Proposition~4.4.4 of \cite{Carter85}, we have $|G^*|=|G|$ and
$|T^*|=|T|$, and it follows that $|U^*|=|U|$ and $|B|=|B^*|$. (Note
that $U$ and $U^*$ have orders equal to the $\ell$-parts of $|G|$
and $|G^*|$, respectively.) Now we have that $|G^*:B^*|$ is a
$\pi'$-number. As $B^*$ is solvable, it therefore contains a Hall
$\pi$-subgroup of $G^*$.
\end{proof}

\begin{lemma}\label{lem:Gross2}
Let $S\neq {}^2F_4(2)'$ be a simple group of Lie type in
characteristic $\ell$. To prove Theorem~\ref{thm:simple} for $S$, it
is sufficient to assume that $\pi\cap\pi(T^*)\neq\emptyset$.
\end{lemma}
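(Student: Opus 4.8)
The plan is to prove the complementary statement directly: if $\pi\cap\pi(T^*)=\emptyset$, then Theorem~\ref{thm:simple} already holds for $S$, so in the remaining analysis one may assume $\pi\cap\pi(T^*)\neq\emptyset$. The key observation is that the disjointness of $\pi$ from $\pi(T^*)$ forces a Hall $\pi$-subgroup of $G^*$ to collapse onto a Sylow $\ell$-subgroup, so that $\pi$ behaves like the single prime $\ell$ as far as $|S|$ is concerned, and the desired character can then be produced by the original theorem of Navarro and Tiep.

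First I would exploit Lemma~\ref{lem:Gross}: since $\ell\in\pi$ and $S$ has a Hall $\pi$-subgroup, $B^*=U^*T^*$ contains a Hall $\pi$-subgroup $H^*$ of $G^*$, and $H^*$ is then also a Hall $\pi$-subgroup of $B^*$, of order $|B^*|_\pi$. Now $U^*\trianglelefteq B^*$ is a Sylow $\ell$-subgroup of $G^*$, hence a normal $\pi$-subgroup of $B^*$ (recall $\ell\in\pi$), so $U^*\le H^*$. On the other hand $|H^*|=|B^*|_\pi=|U^*|\cdot|T^*|_\pi=|U^*|$, because $\pi\cap\pi(T^*)=\emptyset$ gives $|T^*|_\pi=1$. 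Therefore $H^*=U^*$, and the $\pi$-part of $|G^*|$ equals its $\ell$-part. Consequently no prime of $\pi$ other than $\ell$ divides $|G^*|=|G|$; since $S=[G^*,G^*]$ has order dividing $|G^*|$ and $\ell\mid|S|$, I conclude $\pi\cap\pi(S)=\{\ell\}$.

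It then remains to note that, because $\pi\cap\pi(S)=\{\ell\}$, a character $\chi\in\Irr(S)$ has $\pi'$-degree if and only if $\ell\nmid\chi(1)$. Applying Theorem~\ref{thm:Navarro-Tiep} to $S$ with the prime $\ell$ (which divides $|S|$) produces a nontrivial $\chi\in\Irr(S)$ of $\ell'$-degree with $\QQ(\chi)\subseteq\QQ(\zeta_\ell)$; taking $p=\ell\in\pi$ yields exactly the conclusion of Theorem~\ref{thm:simple} in this case. The only real content lies in the order computation of the previous paragraph---pinning down $H^*=U^*$ via the normality of $U^*$ in $B^*$ together with $\pi\cap\pi(T^*)=\emptyset$; once that collapse is established everything reduces to the single-prime theorem of Navarro and Tiep, and no further case analysis is needed.
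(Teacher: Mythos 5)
Your core argument is, in substance, the same as the paper's: the paper also uses Lemma~\ref{lem:Gross} to place a Hall $\pi$-subgroup of $G^*$ inside $B^*=U^*T^*$ and reduces the disjoint case to Theorem~\ref{thm:Navarro-Tiep} applied with the prime $\ell$. The paper phrases it in the forward direction (if $|\pi\cap\pi(S)|\ge 2$, then some prime of $\pi$ other than $\ell$ divides $|B^*|$, hence divides $|T^*|$), whereas you run the contrapositive and additionally pin down the Hall $\pi$-subgroup as exactly $U^*$; your two key steps there are both correct: a normal $\pi$-subgroup of $B^*$ lies in every Hall $\pi$-subgroup of $B^*$, so $U^*\le H^*$, and $\pi\cap\pi(T^*)=\emptyset$ gives $|B^*|_\pi=|U^*|$, whence $H^*=U^*$, $\pi\cap\pi(G^*)=\{\ell\}$, and $\pi\cap\pi(S)=\{\ell\}$.

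There is, however, one genuine (if small) gap: you write ``since $\ell\in\pi$'' as though it were a hypothesis, but the lemma you are proving does not assume $\ell\in\pi$. This is not cosmetic, because Lemma~\ref{lem:Gross} --- your very first step --- has $\ell\in\pi$ among its hypotheses; without it you cannot invoke that lemma, and your final appeal to Theorem~\ref{thm:Navarro-Tiep} with $p=\ell$ would also fail, since you need $\ell\in\pi$ to conclude that the resulting character witnesses Theorem~\ref{thm:simple}. The missing case is disposed of in one sentence, exactly as the paper does: if $\ell\notin\pi$, then Theorem~\ref{thm:simple} already holds for $S$ by Proposition~\ref{prop:ell-notin-pi}, via the Steinberg character, which is rational-valued of $\ell$-power degree. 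With that sentence prepended, your proof is complete and matches the paper's.
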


\begin{proof}
By Proposition~\ref{prop:ell-notin-pi}, we may assume that
$\ell\in\pi$. If $\pi\cap \pi(S)=\{\ell\}$, then
Theorem~\ref{thm:simple} follows from
Theorem~\ref{thm:Navarro-Tiep}. We therefore may assume that
$|\pi\cap \pi(S)|\geq 2$. Since $S$ has a Hall $\pi$-subgroup, by
Lemma~\ref{lem:Gross}, the Borel subgroup $B^*$ contains a Hall
$\pi$-subgroup of $G^*$. In particular, $|\pi\cap \pi(B^*)|\geq 2$,
and hence there exists at least one prime lying in both $\pi$ and
$\pi(T^*)$.
\end{proof}

Recall that $S=G/\bZ(G)$, so the irreducible characters of $S$ are
precisely those irreducible characters of $G$ whose kernel contains
$\bZ(G)$. The set $\Irr(G)$ admits a natural partition into Lusztig
series $\EC(G,s)$ indexed by $G^\ast$-conjugacy classes of
semisimple elements $s\in G^\ast$. The series $\EC(G,s)$ consists of
those irreducible characters of $G$ that are constituents of some
Deligne--Lusztig character $R_{\bS}^{\bG}(\theta)$, where $\bS$ is
an $F$-stable maximal torus of $\bG$ and $\theta\in\Irr(\bS^F)$ is
such that the geometric conjugacy class of the pair $(\bS,\theta)$
corresponds to the $G^\ast$-conjugacy class containing $s$. See
\cite[\S2.6]{GeckMalle}.

We will look for the desired character among the so-called
\emph{semisimple characters}. Recall that \(\rho \in \Irr(G)\) is
called a semisimple character if the average value of \(\rho\) on
\(\mathfrak{C}^F\) is nonzero, where \(\mathfrak{C}\) is the
conjugacy class of \(\bG\) consisting of regular unipotent elements.
For each semisimple element \(s \in G^*\), the Lusztig series
\(\EC(G,s)\) contains one or more semisimple characters, all of
which have degree
\[
|G^* : \bC_{G^*}(s)|_{\ell'}.
\]
In fact, when \(\bC_{\bG^*}(s)\) is connected, \(\EC(G,s)\) contains
exactly one semisimple character; see \cite[p.~171]{GeckMalle}. We
record a well-known fact about these characters below. Here we use
$\ord(g)$ to denote the order of a group element $g$.

\begin{lemma}\label{lem:1}
Assume the above notation. Assume furthermore that $\bC_{\bG^*}(s)$
is connected. Then the unique semisimple character, say $\chi_s$, in
the series \(\EC(G,s)\) is trivial on $\bZ(G)$ if and only if $s\in
S$. In such situation, $\QQ(\chi_s)\subseteq \QQ(\zeta_{\ord(s)})$.
\end{lemma}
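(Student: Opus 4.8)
The plan is to prove the two assertions separately, each via the standard representation-theoretic dictionary for Lusztig series.

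For the characterization of when $\chi_s$ is trivial on $\bZ(G)$, I would invoke the description of the central character of a Lusztig series. Since $\bG$ is simply connected, $\bG^*$ is of adjoint type, and there is a canonical group isomorphism $\bZ(G)=\bZ(\bG)^F\to\Irr\bigl(G^*/[G^*,G^*]\bigr)$ arising from the duality (see \cite{GeckMalle}). Equivalently, there is a canonical homomorphism $s\mapsto\nu_s$ from $G^*$ to $\Irr(\bZ(G))$, with kernel exactly $[G^*,G^*]=S$, such that every $\chi\in\EC(G,s)$ satisfies $\chi|_{\bZ(G)}=\chi(1)\,\nu_s$. Applying this to the semisimple character $\chi_s$, its restriction to $\bZ(G)$ is $\chi_s(1)$ times the linear character $\nu_s$. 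Hence $\chi_s$ is trivial on $\bZ(G)$ precisely when $\nu_s$ is trivial, i.e.\ when $s$ lies in the kernel $[G^*,G^*]=S$. This gives the ``if and only if''.

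For the field of values, I would use the action of $\Gal(\QQ(\zeta_{|G|})/\QQ)$ on Lusztig series, recalling that every value of $\chi_s$ is a sum of $|G|$-th roots of unity, so $\QQ(\chi_s)\subseteq\QQ(\zeta_{|G|})$. Write $\sigma_k$ for the automorphism sending each root of unity $\xi$ to $\xi^k$, where $\gcd(k,|G|)=1$. Since the Green functions are integer-valued, the Galois action on Deligne--Lusztig characters is $R_{\bS}^{\bG}(\theta)^{\sigma_k}=R_{\bS}^{\bG}(\theta^k)$, and under the torus duality raising $\theta$ to its $k$-th power corresponds to raising $s$ to $s^k$; moreover $\sigma_k$ sends semisimple characters to semisimple characters. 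Consequently $(\chi_s)^{\sigma_k}$ is a semisimple character lying in $\EC(G,s^k)$. Now suppose $\sigma_k$ fixes $\QQ(\zeta_{\ord(s)})$ pointwise; then $k\equiv 1\pmod{\ord(s)}$, so $s^k=s$ and in particular $\bC_{\bG^*}(s^k)=\bC_{\bG^*}(s)$ is connected. By uniqueness of the semisimple character in a series with connected centralizer, $(\chi_s)^{\sigma_k}=\chi_{s^k}=\chi_s$. Thus the whole of $\Gal(\QQ(\zeta_{|G|})/\QQ(\zeta_{\ord(s)}))$ stabilizes $\chi_s$, and passing to fixed fields yields $\QQ(\chi_s)\subseteq\QQ(\zeta_{\ord(s)})$.

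The conceptual content is light once the two inputs---the central-character pairing and the Galois formula $(\chi_s)^{\sigma_k}=\chi_{s^k}$---are in place, so the main work lies in citing these correctly and in verifying the compatibility of the duality with the $k$-th power map. The point requiring the most care is that the Galois conjugate of a semisimple character is again semisimple and is indexed by $s^k$ rather than some other twist; this rests on the Galois-equivariance of the parametrization of semisimple characters by semisimple classes. The connectedness hypothesis on $\bC_{\bG^*}(s)$ is used twice: first to ensure that $\chi_s$ is the \emph{unique} semisimple character in $\EC(G,s)$, so that the symbol $\chi_s$ is well defined, and second to identify $(\chi_s)^{\sigma_k}$ with $\chi_{s^k}$ in the case $s^k=s$. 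Since the argument only ever invokes exponents $k\equiv 1\pmod{\ord(s)}$, one never needs connectedness of $\bC_{\bG^*}(s^k)$ for a general $k$, which keeps the field-of-values step completely clean.
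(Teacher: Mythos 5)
Your proposal is correct and follows essentially the same route as the paper: the paper disposes of the first claim by citing [MT11, Prop.~24.21], [Mal07, Lemma~2.2] and [Hun24, Lemma~5.8], whose content is exactly your duality pairing $s\mapsto\nu_s$ with kernel $[G^*,G^*]=S$ controlling the central character of every member of $\EC(G,s)$, and it disposes of the second by citing [GHSV21, Lemma~4.2], whose proof is exactly your Galois argument $(\chi_s)^{\sigma_k}=\chi_{s^k}$ together with uniqueness of the semisimple character in a series with connected centralizer. In other words, you have correctly reconstructed the standard proofs of the facts that the paper's proof simply invokes from the literature.
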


\begin{proof}
The first part follows from, for instance,
\cite[Proposition~24.21]{malletesterman}, \cite[Lemma~2.2]{M07}, and
\cite[Lemma~5.8]{Hung22}. The latter part is
\cite[Lemma~4.2]{GHSV21}.
\end{proof}

We handle the linear and unitary groups separately. To unify
notation, we write $S=\PSL_n^\epsilon(q)$, where the superscript
$\epsilon=+1$ corresponds to the linear groups and $\epsilon=-1$ to
the unitary groups. We use analogous notation for the related
groups, for example $G=\SL_n^\epsilon(q)$ and
$G^*=\PGL_n^\epsilon(q)$. It is more convenient to first study the
characters of $\widetilde{G}:=\GL_n^\epsilon(q)$ and then analyze
those of $G=\SL_n^\epsilon(q)$ as irreducible constituents of
restricted characters. Note that $\widetilde{G}$ is self-dual, and
we will identify $\widetilde{G}$ with its dual group. Let
$\pi:\widetilde{G}\to G^*$ be the natural projection from
$\widetilde{G}$ to $G^*$.

Let $\widetilde{s}$ be a semisimple element of $\widetilde{G}$.
Since the ambient algebraic group of $\widetilde{G}$ has connected
center, the Lusztig series
$\mathcal{E}(\widetilde{G},\widetilde{s})$ contains a unique
semisimple character, which we denote by $\chi_{\widetilde{s}}$.

\begin{lemma}\label{lem:numberofconstituents}
The number of irreducible constituents of the restriction of
semisimple character $\chi_{\widetilde{s}}$ to $G$ divides
$\gcd(\ord(\widetilde{s}),n,q-\epsilon)$.
\end{lemma}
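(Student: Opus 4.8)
The plan is to reduce the statement to a counting problem via Clifford theory for the cyclic quotient $\widetilde{G}/G$, and then to translate the resulting invariance condition into a statement about the eigenvalues of $\widetilde{s}$. First I would recall that the determinant identifies $\widetilde{G}/G=\GL_n^\epsilon(q)/\SL_n^\epsilon(q)$ with a cyclic group of order $q-\epsilon$, so the group $A:=\Lin(\widetilde{G})$ of linear characters of $\widetilde{G}$ is cyclic of order $q-\epsilon$ and acts on $\Irr(\widetilde{G})$ by multiplication. Let $t$ denote the number of irreducible constituents of $\chi_{\widetilde{s}}|_G$, let $\theta$ be one of them, and let $e=\langle(\chi_{\widetilde{s}})_G,\theta\rangle$ be the common multiplicity. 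By Clifford's theorem these constituents form a single $\widetilde{G}$-orbit, so $t=[\widetilde{G}:I_{\widetilde{G}}(\theta)]$ divides $q-\epsilon$. Writing $A_{\chi_{\widetilde{s}}}:=\{\lambda\in A:\lambda\chi_{\widetilde{s}}=\chi_{\widetilde{s}}\}$ for the stabilizer, the standard relation for characters lying over a cyclic (indeed abelian) normal quotient gives $|A_{\chi_{\widetilde{s}}}|=e^2t$; in particular $t$ divides $|A_{\chi_{\widetilde{s}}}|$. Thus it suffices to show that $|A_{\chi_{\widetilde{s}}}|$ divides $\gcd(\ord(\widetilde{s}),n,q-\epsilon)$.

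Next I would identify $A_{\chi_{\widetilde{s}}}$ in dual terms. Under the self-duality of $\widetilde{G}$, the group $A$ is isomorphic to $Z:=\bZ(\widetilde{G})$, the group of scalar matrices, which is cyclic of order $q-\epsilon$; and tensoring by the linear character $\widehat{z}$ attached to a central element $z\in Z$ sends a Lusztig series $\EC(\widetilde{G},\widetilde{s})$ to $\EC(\widetilde{G},\widetilde{s}z)$, carrying the semisimple character $\chi_{\widetilde{s}}$ to the semisimple character $\chi_{\widetilde{s}z}$. Since the center of the ambient algebraic group is connected, each Lusztig series contains a single semisimple character, and distinct semisimple classes give distinct such characters. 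Hence $\widehat{z}\,\chi_{\widetilde{s}}=\chi_{\widetilde{s}}$ if and only if $\widetilde{s}z$ is $\widetilde{G}$-conjugate to $\widetilde{s}$, which yields the identification
\[
A_{\chi_{\widetilde{s}}}\;\cong\;\{z\in Z:\ \widetilde{s}z\sim_{\widetilde{G}}\widetilde{s}\}.
\]

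Finally I would carry out the eigenvalue count. Writing $z=\omega I$ with $\omega$ ranging over the cyclic group of order $q-\epsilon$, the condition $\widetilde{s}z\sim_{\widetilde{G}}\widetilde{s}$ says exactly that multiplication by $\omega$ preserves the multiset of $n$ eigenvalues of $\widetilde{s}$ in $\overline{\FF}_q^\times$. The $\omega$ satisfying this form a cyclic group of order $d:=|A_{\chi_{\widetilde{s}}}|$; a generator $\omega_0$ acts freely on $\overline{\FF}_q^\times$, so all of its orbits on the eigenvalue multiset have size $d$ and $d\mid n$. Moreover, fixing any eigenvalue $\mu$, both $\mu$ and $\omega_0\mu$ are $\ord(\widetilde{s})$-th roots of unity (as $\widetilde{s}^{\,\ord(\widetilde{s})}=1$), so $\omega_0=(\omega_0\mu)/\mu$ satisfies $\omega_0^{\,\ord(\widetilde{s})}=1$, giving $d\mid\ord(\widetilde{s})$; and trivially $d\mid|Z|=q-\epsilon$. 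Therefore $|A_{\chi_{\widetilde{s}}}|=d$ divides $\gcd(\ord(\widetilde{s}),n,q-\epsilon)$, and combined with $t\mid|A_{\chi_{\widetilde{s}}}|$ this proves the lemma. The step I expect to require the most care is the second one: pinning down the tensoring-equals-translation correspondence for semisimple characters together with the precise Clifford-theoretic identity $|A_{\chi_{\widetilde{s}}}|=e^2t$, and verifying that $|Z|=q-\epsilon$ in both the linear and unitary cases; the eigenvalue bookkeeping in the last step is then elementary.
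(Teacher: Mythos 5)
Your proof is correct, but it takes a genuinely different route from the paper's. The paper never touches Clifford--Gallagher theory: it quotes \cite[Corollary~2.6.18]{GeckMalle} to see that $(\chi_{\widetilde{s}})_G$ is multiplicity-free with constituents exactly the semisimple characters of $\EC(G,s)$, each of degree $|G^*:\bC_{G^*}(s)|_{\ell'}$, so the number of constituents is the ratio of degrees, i.e.\ the index $|\pi^{-1}(\bC_{G^*}(s)):\bC_{\widetilde{G}}(\widetilde{s})|$; it then realizes this index as the order of the image of the conjugation homomorphism $\kappa\colon \pi^{-1}(\bC_{G^*}(s))\to Z$, $g\widetilde{s}g^{-1}=\kappa(g)\widetilde{s}$, and bounds $|\Im(\kappa)|$ by $n$ (via determinants), by $\ord(\widetilde{s})$ (conjugation preserves order), and by $q-\epsilon$. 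You instead bound the number of constituents by $|A_{\chi_{\widetilde{s}}}|$ via the counting identity $|A_{\chi_{\widetilde{s}}}|=e^2t$ for the abelian quotient $\widetilde{G}/G$, and then use the Digne--Michel translation $\widehat{z}\otimes\chi_{\widetilde{s}}=\chi_{\widetilde{s}z}$ together with the disjointness of Lusztig series to identify $A_{\chi_{\widetilde{s}}}$ with $\{z\in Z: \widetilde{s}z\sim_{\widetilde{G}}\widetilde{s}\}$. In fact the two proofs converge on literally the same subgroup of scalars, since $\Im(\kappa)=\{z\in Z:z\widetilde{s}\sim_{\widetilde{G}}\widetilde{s}\}$, and your final divisibility bookkeeping (orbit counting for $d\mid n$, ratios of roots of unity for $d\mid\ord(\widetilde{s})$) is an equivalent repackaging of the paper's determinant and order arguments. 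What the paper's route buys is that the homomorphism $\kappa$ is reused verbatim to prove the subsequent Lemma~\ref{lem:2} (connectedness of $\bC_{\bG^*}(s)$ and the irreducible-restriction criterion), whereas your route trades the degree formula for two other standard inputs: the Gallagher-type identity and the series-translation result for central linear characters, plus the fact that semisimple conjugacy in $\GL_n^\epsilon(q)$ is detected by eigenvalue multisets (which rests on connectedness of centralizers in $\GL_n$). Two small points of hygiene: the group you should work with is $\Irr(\widetilde{G}/G)$ rather than $\Lin(\widetilde{G})$ --- these coincide except for a few tiny groups where $[\widetilde{G},\widetilde{G}]\neq\SL_n^\epsilon(q)$, and the identity $|A_{\chi}|=e^2t$ is only valid when the sum runs over characters trivial on $G$; and strictly speaking your appeal to self-duality should cite the canonical isomorphism $z\mapsto\widehat{z}$ from $\bZ(\widetilde{\bG}^*)^{F^*}$ onto $\Irr(\widetilde{G}/G)$, which is exactly the form in which the translation property is available in the literature.
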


\begin{proof}
Set $s:=\pi(\widetilde{s})$. Recall that
$\chi_{\widetilde{s}}(1)=|\widetilde{G}:\bC_{\widetilde{G}}(\widetilde{s})|_{\ell'}$.
The restriction of $\chi_{\widetilde{s}}$ from $\widetilde{G}$ to
$G$ is multiplicity-free and its irreducible constituents are
precisely the semisimple characters of the Lusztig series
\(\EC(G,s)\) by \cite[Corollary~2.6.18]{GeckMalle}. Each of these
constituent has degree $|G^* : \bC_{G^*}(s)|_{\ell'}$. Therefore,
the number of irreducible constituents of the restriction is
\[
\frac{|\widetilde{G}:\bC_{\widetilde{G}}(\widetilde{s})|_{\ell'}}{|G^*
:
\bC_{G^*}(s)|_{\ell'}}=\frac{|\widetilde{G}:\bC_{\widetilde{G}}(\widetilde{s})|_{\ell'}}{|\widetilde{G}
:
\pi^{-1}(\bC_{G^*}(s))|_{\ell'}}=|\pi^{-1}(\bC_{G^*}(s)):\bC_{\widetilde{G}}(\widetilde{s})|_{\ell'}.
\]

Let $Z$ be the (cyclic) subgroup of the multiplicative group
$\FF_{q^2-1}^\times$ of order $q-\epsilon$. Consider the mapping
$\kappa: \pi^{-1}(\bC_{G^*}(s))\to Z$ defined by
$g\widetilde{s}g^{-1}=\kappa(g)\widetilde{s}$. This is indeed a
homomorphism with $\Ker(\kappa)=\bC_{\widetilde{G}}(\widetilde{s})$.
It follows that the index
$|\pi^{-1}(\bC_{G^*}(s)):\bC_{\widetilde{G}}(\widetilde{s})|$ is
equal to the order of the image $\Im(\kappa)$ of $\kappa$.
Therefore, the lemma follows once we show that
\[
|\Im(\kappa)| \text{ divides }
\gcd(\ord(\widetilde{s}),n,q-\epsilon).
\]

First, it is clear that $|\Im(\kappa)|\mid (q-\epsilon)$. Now let
$g$ be an arbitrary element of $\pi^{-1}(\bC_{G^*}(s))$. We have
$\det(\widetilde{s})=\det(g\widetilde{s}g^{-1})=\det(\kappa(g)\widetilde{s})=\kappa(g)^n\det(\widetilde{s})$,
and hence $\kappa(g)^n=1$; equivalently, $\ord(\kappa(g))\mid n$.
Finally, we observe that
$\ord(\widetilde{s})=\ord(g\widetilde{s}g^{-1})=\ord(\kappa(g)\widetilde{s})=\lcm(\ord(\kappa(g)),\widetilde{s})$,
which implies that $\ord(\kappa(g))\mid \ord(\widetilde{s})$.
\end{proof}

The proof of Lemma~\ref{lem:numberofconstituents} also yeilds the
following well-known result; however, we only require the first
statement.

\begin{lemma}\label{lem:2}
Let $s := \pi(\widetilde{s}) \in G^*$, where $\widetilde{s}$ is a
semisimple element of $\widetilde{G}$. Suppose that the multiset of
eigenvalues of $\widetilde{s}$ is not invariant under multiplication
by every nontrivial root of unity. Then the centralizer
$\bC_{\bG^*}(s)$ is connected. Moreover, if the multiset of
eigenvalues of $\widetilde{s}$ is not invariant under multiplication
by every nontrivial element of the subgroup of $\FF_{q^2-1}^\times$
of order $q-\epsilon$, then $\chi_{\widetilde{s}}$ restricts
irreducibly to $G$.
\end{lemma}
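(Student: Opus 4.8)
The plan is to read off both conclusions from the homomorphism constructed in the proof of Lemma~\ref{lem:numberofconstituents}, together with its algebraic analogue. The common mechanism is the following elementary observation: if an element $g$ conjugates $\widetilde{s}$ to $\lambda\widetilde{s}$ for a scalar $\lambda$, then $\widetilde{s}$ and $\lambda\widetilde{s}$ have the same eigenvalues, so multiplication by $\lambda$ fixes the multiset of eigenvalues of $\widetilde{s}$; conversely, over $\overline{\FF_q}$ any two semisimple matrices with the same eigenvalue multiset are conjugate. Hence, writing
\[
\Lambda:=\{\lambda : \lambda\cdot(\text{eigenvalue multiset of }\widetilde{s})=(\text{eigenvalue multiset of }\widetilde{s})\},
\]
we obtain a finite group of roots of unity (finite because multiplication by $\lambda$ permutes the finitely many distinct eigenvalues) that controls both statements.

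For the first assertion I would work in the ambient algebraic group. Let $\widetilde{\bG}=\GL_n$ be the algebraic group with $\widetilde{\bG}^F=\widetilde{G}$, and let $\widetilde{\pi}\colon\widetilde{\bG}\to\bG^*=\PGL_n$ be the natural projection, whose kernel is the connected central torus $\bZ(\widetilde{\bG})$ of scalars. The preimage $P:=\widetilde{\pi}^{-1}(\bC_{\bG^*}(s))$ is a closed subgroup of $\widetilde{\bG}$ consisting of those $g$ with $g\widetilde{s}g^{-1}=\lambda\widetilde{s}$ for some scalar $\lambda$, and sending $g$ to this $\lambda$ defines a morphism of algebraic groups $P\to\bZ(\widetilde{\bG})$ with kernel $\bC_{\widetilde{\bG}}(\widetilde{s})$ and image exactly $\Lambda$. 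Since $\bC_{\widetilde{\bG}}(\widetilde{s})$ is connected (being a product of general linear groups on the eigenspaces of $\widetilde{s}$) and has finite index $|\Lambda|$ in $P$, it is precisely the identity component $P^\circ$. As $\bZ(\widetilde{\bG})$ lies in $\bC_{\widetilde{\bG}}(\widetilde{s})=P^\circ$, passing to the quotient by $\bZ(\widetilde{\bG})$ gives
\[
\bC_{\bG^*}(s)/\bC_{\bG^*}(s)^\circ \;\cong\; P/P^\circ \;\cong\; \Lambda .
\]
Under the hypothesis of the first statement no nontrivial root of unity preserves the eigenvalue multiset, so $\Lambda$ is trivial and $\bC_{\bG^*}(s)$ is connected.

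For the second assertion I would invoke Lemma~\ref{lem:numberofconstituents} directly. There the number of irreducible constituents of the restriction of $\chi_{\widetilde{s}}$ to $G$ is shown to equal $|\Im(\kappa)|$, where $\kappa\colon\pi^{-1}(\bC_{G^*}(s))\to Z$ is given by $g\widetilde{s}g^{-1}=\kappa(g)\widetilde{s}$ and $Z\le\FF_{q^2-1}^\times$ is the subgroup of order $q-\epsilon$. By the very definition of $\kappa$, every element of $\Im(\kappa)$ is an element $\lambda\in Z$ for which $\lambda\widetilde{s}$ is conjugate to $\widetilde{s}$, and hence lies in $\Lambda\cap Z$ by the easy direction of the opening observation. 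The hypothesis of the second statement says that no nontrivial element of $Z$ preserves the eigenvalue multiset, that is, $\Lambda\cap Z=\{1\}$; therefore $\Im(\kappa)$ is trivial, the restriction has a single constituent, and $\chi_{\widetilde{s}}$ restricts irreducibly to $G$.

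The index and image computations are already carried out in Lemma~\ref{lem:numberofconstituents}, so the only genuinely new point is the identity-component identification in the second paragraph. The main obstacle there is to argue carefully in the algebraic group $\widetilde{\bG}$ rather than in the finite group $\widetilde{G}$ — so that ``conjugate'' means conjugate over $\overline{\FF_q}$ and ``connected'' refers to the Zariski topology — while verifying that $\bC_{\widetilde{\bG}}(\widetilde{s})$ is simultaneously connected and of finite index in $P$, which is exactly what forces it to be $P^\circ$.
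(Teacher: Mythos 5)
Your proof is correct and takes essentially the same route as the paper's: the irreducibility statement is read off from the homomorphism $\kappa$ of Lemma~\ref{lem:numberofconstituents}, and the connectedness statement from its algebraic-group analogue together with the connectedness of $\bC_{\widetilde{\bG}}(\widetilde{s})$. The only difference is cosmetic: you identify the full component group $\bC_{\bG^*}(s)/\bC_{\bG^*}(s)^\circ\cong\Lambda$ before specializing, whereas the paper argues directly that under the hypothesis $\bC_{\bG^*}(s)$ is the image of the connected group $\bC_{\widetilde{\bG}}(\widetilde{s})$ and hence connected.
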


\begin{proof}
For every $g \in \pi^{-1}\bigl(\bC_{G^*}(s)\bigr)$, we have
$g\widetilde{s}g^{-1} = \kappa(g)\widetilde{s}$, and hence
$g\widetilde{s}g^{-1}$ and $\widetilde{s}$ have the same
eigenvalues. Therefore, if the multiset of eigenvalues of
$\widetilde{s}$ is not invariant under multiplication by every
nontrivial element of the subgroup of $\FF_{q^2-1}^\times$ of order
$q-\epsilon$, the homomorphism $\kappa$ must have trivial image,
which proves the second statement.

A similar argument, now in the setting of algebraic groups
$\widetilde{\bG} := \GL^\epsilon_n(\overline{\FF}_{\ell})$ and
${\bG^*} := \PGL^\epsilon_n(\overline{\FF}_{\ell})$ instead, shows
that if the multiset of eigenvalues of $\widetilde{s}$ is not
invariant under multiplication by every nontrivial root of unity,
then $\bC_{\bG^*}(s)$ is the image of the connected group
$\bC_{\widetilde{\bG}}(\widetilde{s})$ under $\pi$ and hence is
itself connected.
\end{proof}

\begin{proposition}\label{prop:linear-unitary}
Theorem~\ref{thm:simple} holds for the simple groups $S=\PSL_n(q)$
and $\PSU_n(q)$, where $n\geq 2$ and $q$ is a prime power.
\end{proposition}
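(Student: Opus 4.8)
The plan is to locate the required character among the semisimple characters of $\widetilde G=\GL_n^\epsilon(q)$, restrict it to $G=\SL_n^\epsilon(q)$, and view the result on $S$. By Proposition~\ref{prop:ell-notin-pi} I may assume $\ell\in\pi$, and by Theorem~\ref{thm:Navarro-Tiep} I may assume $|\pi\cap\pi(S)|\ge 2$; then Lemma~\ref{lem:Gross} places a Hall $\pi$-subgroup $H^*$ of $G^*$ inside $B^*=U^*T^*$.

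The heart of the argument is to read off from this containment two constraints on each prime $r\in\pi\cap\pi(S)$ with $r\neq\ell$. Since $|B^*|=q^{\binom n2}(q-\epsilon)^{n-1}$ and $q$ is a power of $\ell$, the fact that $H^*$ contains a Sylow $r$-subgroup forces, first, $r\mid q-\epsilon$; and then, comparing
\[
v_r(|B^*|)=(n-1)\,v_r(q-\epsilon)\quad\text{with}\quad v_r(|G^*|)=(n-1)\,v_r(q-\epsilon)+v_r(n!),
\]
the latter obtained by lifting the exponent (licit since $r$ is odd and $r\mid q-\epsilon$), forces $v_r(n!)=0$, i.e.\ $r>n$. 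I expect this to be the main obstacle: it is the only place where the Hall hypothesis is genuinely used, and once $r>n$ is in hand every later divisibility check becomes automatic, because the centralizers of the elements below are split Levi subgroups $\prod_i\GL_{m_i}^\epsilon(q)$ whose $r$-parts already exhaust that of $\widetilde G$.

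With this in place I would choose $p\in\pi\cap\pi(T^*)$, which is nonempty by Lemma~\ref{lem:Gross2}, so that $p\mid q-\epsilon$ and $p>n$; pick $\lambda$ of order $p$ in the cyclic group $Z\le\FF_{q^2-1}^\times$ of order $q-\epsilon$; and set $\widetilde s=\diag(\lambda,\lambda^{-1},1,\dots,1)\in\widetilde G$. Then $\det\widetilde s=1$, so $s:=\pi(\widetilde s)\in S$, and $s\neq 1$ since $\widetilde s$ is non-scalar. Because $p$ is odd with $p>n$, a short case check shows the eigenvalue multiset $\{\lambda,\lambda^{-1},1^{n-2}\}$ is invariant under no nontrivial root of unity and under no nontrivial element of $Z$; by Lemma~\ref{lem:2} this makes $\bC_{\bG^*}(s)$ connected and $\chi_{\widetilde s}|_G$ irreducible, whence (Lemma~\ref{lem:numberofconstituents}) the restriction $\chi_{\widetilde s}|_G=\chi_s$ is the unique semisimple character of $\EC(G,s)$.

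It then remains to collect the conclusions. By Lemma~\ref{lem:1} (applicable as $\bC_{\bG^*}(s)$ is connected) $\chi_s$ is trivial on $\bZ(G)$ because $s\in S$, so it defines a nontrivial $\chi\in\Irr(S)$ with $\QQ(\chi)\subseteq\QQ(\zeta_{\ord(s)})=\QQ(\zeta_p)$. Finally
\[
\chi(1)=|\widetilde G:\bC_{\widetilde G}(\widetilde s)|_{\ell'}=\frac{(q^{n-1}-\epsilon^{n-1})(q^n-\epsilon^n)}{(q-\epsilon)^2}
\]
is coprime to $\ell$ by construction, and for each remaining $r\in\pi\cap\pi(S)$ lifting the exponent gives $v_r(\chi(1))=v_r(n-1)+v_r(n)=0$ since $r>n$; hence $\chi$ has $\pi'$-degree, as required. (Non-simple or exceptionally isomorphic small cases are excluded by the simplicity of $S$, and the case $\ell\notin\pi$ is Proposition~\ref{prop:ell-notin-pi}.)
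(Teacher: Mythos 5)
Your treatment of $\PSL_n(q)$ is correct and is essentially the paper's own argument: the same element $\widetilde s$ with eigenvalues $\lambda,\lambda^{-1},1^{n-2}$ for $\lambda$ of order $p\mid q-1$, the same appeals to Lemma~\ref{lem:2} and Lemma~\ref{lem:1}. Your only real deviation there is the verification of $\pi'$-degree via lifting the exponent and $r>n$; the paper argues more simply that $T^*\le\bC_{\bG^*}(s)$, so $\chi_s(1)$ divides $|G^*:T^*|_{\ell'}=|G^*:B^*|$, which is a $\pi'$-number by Lemma~\ref{lem:Gross}. Both work in the linear case.

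The unitary case, however, has a genuine error at its root. The formula $|B^*|=q^{\binom{n}{2}}(q-\epsilon)^{n-1}$ is false for $\epsilon=-1$: the maximally split torus of $\PGU_n(q)$ has order $(q^2-1)^{(n-1)/2}$ for $n$ odd and $(q^2-1)^{(n-2)/2}(q-1)$ for $n$ even, not $(q+1)^{n-1}$. Running the order comparison with the correct torus order gives the \emph{opposite} of your conclusion: an odd prime $r\in\pi\setminus\{\ell\}$ cannot divide $q+1$ and must divide $q-1$ (this is exactly how the paper deduces $p\mid q-1$ in both cases). Consequently your element does not exist when $\epsilon=-1$: there is no $\lambda$ of order $p$ in the subgroup of order $q-\epsilon=q+1$, and no diagonal element of $\GU_n(q)$ can have order $p$. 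The cascade continues: the bound $r>n$ is also unavailable for unitary groups (the correct comparison only yields $r>\lfloor n/2\rfloor$), so the eigenvalue-invariance route through Lemma~\ref{lem:2} genuinely fails in cases such as $\PSU_3(q)$ with $p=3\mid q-1$, where the multiset $\{\lambda,\lambda^{-1},1\}$ \emph{is} invariant under multiplication by $\lambda$; and your closing degree formula is wrong as well, since the relevant centralizer is $\GL_1(q^2)\times\GU_{n-2}(q)$, giving denominator $q^2-1$ rather than $(q-\epsilon)^2$. The paper repairs all of this by realizing an element with eigenvalues $\lambda,\lambda^{-1}$, $\lambda$ of order $p\mid q-1$, inside a torus of order $q^2-1$ of $\GU_2(q)\le\GU_n(q)$ (possible precisely because $\lambda^{-q}=\lambda^{-1}$ when $p\mid q-1$), and by invoking Lemma~\ref{lem:numberofconstituents} with $\gcd(\ord(\widetilde s),n,q+1)=\gcd(p,n,q+1)=1$ to get irreducibility of the restriction to $G$, thereby sidestepping the eigenvalue-invariance condition entirely; triviality on $\bZ(G)$, the $\pi'$-degree, and the field of values are then handled as in the linear case.
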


\begin{proof}
Recall that $T^*$ is a maximal torus contained in a Borel subgroup
$B^*$ of $G^*$. By Lemma~\ref{lem:Gross2}, we may assume that $\pi$
and $\pi(T^*)$ share a common prime, say $p$. In the linear case, we
have
\[
|T^*| = (q-1)^{n-1},
\]
so $p$ divides $q-1$. In the unitary case,
\[
|T^*| =
\begin{cases}
(q^2-1)^{(n-1)/2}, & \text{if $n$ is odd},\\[2pt]
(q^2-1)^{(n-2)/2}(q-1), & \text{if $n$ is even},
\end{cases}
\]
and hence $p$ divides $q^2-1$ (see \cite[p.~74]{Carter85}). Since
$B^*$ contains a Hall $\pi$-subgroup of $G^*$ by
Lemma~\ref{lem:Gross}, a comparison of the orders of $G^*$ and $B^*$
shows that $p$ cannot divide $q+1$. Therefore, as in the linear
case, we again conclude that $p$ divides $q-1$.

We first consider the case $\epsilon=1$. Let $\delta$ be a generator
of $\FF_{q}^\times$, and define the semisimple element
\[
\widetilde{s} := \diag\!\bigl( \delta^{(q-1)/p},
\delta^{(q-1)(p-1)/p}, 1^{\,n-2} \bigr) \in \widetilde{G}.
\]
Let $s$ denote the image $\pi(\widetilde{s})$ of $\widetilde{s}$
under $\pi:\widetilde{G}\to G^*$. Then
\[
\ord(s) = \ord(\widetilde{s}) = p.
\]
Taking conjugation if necessary, we may assume that $s\in T^*$.
Moreover, as $\det(\widetilde{s})=\delta^{q-1}=1$, we have
\[
\widetilde{s}\in G \quad\text{and}\quad s\in S\cap T^*.
\]

Note that the case $(n,p)=(3,3)$ cannot occur. (Otherwise, by
Lemma~\ref{lem:Gross}, the Borel subgroup $B^*$, of order
$q^3(q-1)^2$, would contain a Sylow $3$-subgroup of $G^*$ whose
order $q^3(q^2-1)(q^3-1)$ has larger $3$-part, leading to a
contradiction.) It is then easy to check that the multiset of
eigenvalues of $\widetilde{s}$ is not invariant under multiplication
by every nontrivial root of unity. By Lemma~\ref{lem:2}, it follows
that $\bC_{\bG^*}(s)$ is connected, and we let $\chi_s$ be the
unique semisimple character in the series $\EC(G,s)$. Using
Lemma~\ref{lem:1}, we deduce that
\[\QQ(\chi_{s})\subseteq\QQ(\zeta_{\ord(s)})=\QQ(\zeta_p).\] Moreover,
since $s\in T^*$ and $T^*$ is abelian, we have \[T^*\le
\bC_{\bG^*}(s).\] It follows that \[|G^*:\bC_{\bG^*}(s)| \text{
divides } |G^*:T^*|,\] and hence
\[
\chi_s(1)=|G^*:\bC_{G^*}(s)|_{\ell'} \text{ divides }
|G^*:T^*|_{\ell'}.
\]
As $|G^*:T^*|_{\ell'}=|G^*:B^*|$ is a $\pi'$-number by
Lemma~\ref{lem:Gross}, we conclude that $\chi_s(1)$ is also a
$\pi'$-number. Also, from the fact that $s\in S=[G^*,G^*]$, we know
that $\chi_{s}$ is trivial on $\bZ({G})$. This completes the proof
for linear groups.

It remains to consider the case $\epsilon=-1$ and $p \mid (q-1)$.
Let \(\widetilde{t}\) be a semisimple element of order \(p\) in a
maximal torus of order $q^2-1$ of \(\GU_2(q)\). Since \(p \mid
(q-1)\) and \(p\) is odd, we have \(\gcd(p,q+1)=1\), and hence
\(\widetilde{t}\in \SU_2(q)\). Define the semisimple element
\[
\widetilde{s}:=\diag\left(\widetilde{t}, I_{n-2}\right)\in
\widetilde{G},
\]
and consider the corresponding semisimple character
\(\chi_{\widetilde{s}}\in \Irr(\widetilde{G})\). Note that
$\widetilde{s}$ belongs to a maximal torus of $\widetilde{G}$ of
order $(q^2-1)^{(n-1)/2}(q+1)$ if $n$ is odd, and of order
$(q^2-1)^{n/2}$ if $n$ is even.

Note also that \(\ord(\widetilde{s})=\ord(\widetilde{t})=p\), which
does not divide $q+1$. Lemma~\ref{lem:numberofconstituents} then
implies that \(\chi_{\widetilde{s}}\) restricts irreducibly to
\(G\). Arguments similar to those in the linear case show that this
restriction is trivial on \(\bZ(G)\), has \(\pi'\)-degree, and takes
values in \(\QQ(\zeta_p)\). This completes the proof.
\end{proof}

We can now finish the proof of Theorem~\ref{thm:simple}.

\begin{proof}[Proof of Theorem~\ref{thm:simple}]
By Propositions~\ref{prop:alternatingandsporadic} and
\ref{prop:linear-unitary}, and the classification of finite simple
groups, we may assume that $S \neq {}^2F_4(2)'$ is a simple group of
Lie type not of type $A$. By Lemma~\ref{lem:Gross2}, there exists at
least one prime lying in both $\pi$ and $\pi(T^*)$. As before, let
$p$ be such a prime.

Note that
\[
|T^*:(T^*\cap S)| = |T^*S:S| \text{ divides } |G^*:S|
\]
and $|G^*:S|$ is the order of the group of \emph{diagonal
automorphisms} of $S$. On the other hand, the order of $T^*$ is
given by
\[
|T^*|=\prod_{\mathfrak{o}\in \mathcal{O}}
\bigl(q^{|\mathfrak{o}|}-1\bigr),
\]
where $\mathcal{O}$ is the set of $F^*$-orbits on the simple roots
of the root system of $\bG^*$ associated with $\bB^*$ and $\bT^*$,
and $q$ is the absolute value of the eigenvalues of $F^*$ on the
character group of $\bT^*$; see \cite[p.~74]{Carter85}. A
straightforward case-by-case check, using \cite[Table~5]{Atl1} for
the size of the group of diagonal automorphisms and
\cite[\S1.19]{Carter85} for the sizes of the $F^*$-orbits, reveals
that if an odd prime $p$ is a divisor of $|G^*:S|$, then indeed
$(|G^*:S|_p)^2$ divides $|T^*|$, and it follows that $p$ also
divides $|T^*\cap S|$. (The only exception is the case $S=\PSU_3(q)$
with $(q+1)_3=3$, but this was already excluded using
Proposition~\ref{prop:linear-unitary}.) Therefore, we may and will
assume from now on that \[p\in \pi\cap \pi(T^*\cap S).\]
Consequently, there exists a semisimple element $s\in G^*$ such that
\[
s\in T^*\cap S \quad\text{and}\quad \ord(s)=p.
\]

Suppose first that $p\nmid |\bZ(\bG)|$. Then $\bC_{\bG^*}(s)$ is
connected by \cite[Corollary~4.6]{Borel70}. Arguing as in the proof
of Proposition~\ref{prop:linear-unitary} and applying
Lemmas~\ref{lem:Gross} and \ref{lem:1}, we have that the unique
semisimple character $\chi_s$ in $\EC(G,s)$ has $\pi'$-degree, is
trivial on $\bZ(G)$, and satisfies $\QQ(\chi_s)\subseteq
\QQ(\zeta_p)$.

Next suppose that $p\mid |\bZ(\bG)|$. By
\cite[Theorem~1.12.5]{GLS98} and the assumption that $p$ is odd, we
are left with only the case $p=3$ and $\bG$ to be of type $E_6$.
Here, $G=E_6^\epsilon(q)$ with $\epsilon\in\{\pm1\}$, where
$\epsilon=1$ corresponds to the untwisted groups and $\epsilon=-1$
to the twisted groups, and $|\bZ(G)|=\gcd(3,q-\epsilon)=3$.

As mentioned above, all the semisimple characters in the Lusztig
series $\EC(G,s)$ indexed by $s$ have the same degree
$|G^*:\bC_{G^*}(s)|_{\ell'}$. This is a $\pi'$-number due to the
fact that $s\in T^*$, as argued in the proof of
Proposition~\ref{prop:linear-unitary}. Moreover, as noted in the
proof of \cite[Proposition~4.5]{GHSV21}, these semisimple characters
take integer values on unipotent elements and therefore have field
of values contained in $\QQ(\zeta_3)$, by \cite[Lemma~4.3]{GHSV21}.
Finally, since their degrees are coprime to $3$, their restrictions
to $\bZ(G)$ are multiples of the trivial character. (Let $\chi$ be
such a character and let $\chi_{\bZ(G)}=\chi(1)\alpha$ for some
$\alpha\in\Irr(\bZ(G))$. Then
$\mathbf{1}_{\bZ(G)}=\det(\chi)_{\bZ(G)}=\det(\chi_{\bZ(G)})=\det(\chi(1)\alpha)=\alpha^{\chi(1)}$,
which implies that the order of $\alpha$ divides
$(\chi(1),|\bZ(G)|)$.) This completes the proof.
\end{proof}

\section{A Proof of Theorem \ref{thm:main}}

We are now in the position to give a complete proof of our main result, as stated in the introduction.

A finite group admitting a Hall $\pi$-subgroup is often called an
\emph{$E_\pi$-group}. There is a large literature on the theory of
$E_\pi$-groups, including the results about simple groups we cited in
the previous section. We refer the reader to \cite{RV10} for the
latest results and relevant information. Note that the class of
$E_\pi$-groups is not closed under extensions and a subgroup of an
$E_\pi$-group might be no longer an $E_\pi$-group. Nevertheless, the
only fact we need is that every normal/subnormal subgroup and every
quotient of an $E_\pi$-group is an $E_\pi$-group. In fact, if $H$ is
a Hall $\pi$-subgroup of $G$ and $N\nor G$, then $H\cap N$ is a Hall
$\pi$-subgroup of $N$ and $HN/N$ is a Hall $\pi$-subgroup of $G/N$.

Another fact we need is that if $N\nor G$ such that $|G:N|=r$ is an
odd prime and $\theta\in \Irr(N)$ with $\QQ(\theta)\subseteq
\QQ(\zeta_p)$ for some prime $p\neq r$, then there exists an
irreducible constituent $\chi$ of $\theta^G$ such that
$\QQ(\chi)\subseteq \QQ(\zeta_p)$. This follows from
\cite[Lemma~2.2]{GHSV21}, for instance.

We restate Theorem~\ref{thm:main} for the reader's convenience.

\begin{theorem}
Let $\pi$ be a set of odd primes and $G$ a finite group such that
$G$ has a nontrivial Hall $\pi$-subgroup. Then $G$ possesses a
nontrivial $\pi'$-degree irreducible character with values in
$\QQ(\zeta_p)$ for some $p\in\pi$.
\end{theorem}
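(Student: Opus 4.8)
The plan is to reduce the general finite group case to the simple group case already established in Theorem~\ref{thm:simple}, using a standard minimal-counterexample argument. Suppose for contradiction that $G$ is a counterexample of minimal order: that is, $G$ has a nontrivial Hall $\pi$-subgroup but no nontrivial $\chi\in\Irr_{\pi'}(G)$ with $\QQ(\chi)\subseteq\QQ(\zeta_p)$ for any $p\in\pi$. The first step is to exploit the machinery recorded just before the statement: every normal subgroup, every subnormal subgroup, and every quotient of an $E_\pi$-group is again an $E_\pi$-group, with $H\cap N$ and $HN/N$ serving as Hall $\pi$-subgroups of $N$ and $G/N$ respectively. I would first handle the case where $G$ has a proper nontrivial normal subgroup $N$ whose Hall $\pi$-subgroup $H\cap N$ is nontrivial: then $N$ (or a suitable quotient) is a smaller $E_\pi$-group with a nontrivial Hall $\pi$-subgroup, so minimality gives a good character $\theta$ on it that we try to lift or extend to $G$.

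The core reduction step is to push a good character up through the subnormal series. Concretely, if $N\nor G$ carries a nontrivial $\theta\in\Irr_{\pi'}(N)$ with $\QQ(\theta)\subseteq\QQ(\zeta_p)$, I want an irreducible constituent $\chi$ of $\theta^G$ that still has $\pi'$-degree and still takes values in $\QQ(\zeta_p)$. Since $|G:N|$ may be composite, I would interpolate through a subnormal chain $N=N_0\nor N_1\nor\cdots\nor N_k=G$ in which each index $|N_{i+1}:N_i|$ is prime; one builds this by refining a composition-type series through $N$. At each step the index is either an $\ell'$-prime distinct from the field-defining prime $p$, in which case the displayed fact quoted before the statement (namely the consequence of \cite[Lemma~2.2]{GHSV21}, valid when $|G:N|=r$ is an odd prime $\neq p$) produces a constituent of the right field and degree, or the index lies in $\pi$, which needs separate care since inducing across a prime in $\pi$ can multiply the degree by that prime. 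I expect this interaction — controlling the $\pi$-part of the degree when climbing across primes in $\pi$ — to be the main obstacle, and the cleanest fix is to choose the chain so that $N$ already contains a full Hall $\pi$-subgroup of $G$, forcing every index $|N_{i+1}:N_i|$ to be a $\pi'$-number; then no induction step increases the $\pi$-part, and each step is covered by the quoted lemma.

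To secure such an $N$, I would distinguish two structural cases for the minimal counterexample. If $G$ is not itself (quasi)simple, pick a minimal normal subgroup $M\nor G$. When $M$ has a nontrivial Hall $\pi$-subgroup, minimality applied to $M$ gives a nontrivial good character there, and since $M\nor G$ I can choose the subnormal chain from $M$ up to $G$; the delicate point is again arranging that the Hall $\pi$-subgroup sits inside $M$, which may require replacing $M$ by a larger normal subgroup containing a Hall $\pi$-subgroup (for instance the normal closure of the Hall subgroup, or working in $G/M$ when $M$ is a $\pi'$-group and the Hall $\pi$-subgroup survives in the quotient). When instead $M$ is a $\pi'$-group, its Hall $\pi$-subgroup is trivial, so I pass to $G/M$, which is a smaller $E_\pi$-group with a nontrivial Hall $\pi$-subgroup (as $HM/M\cong H/(H\cap M)=H$ is nontrivial); minimality yields a good character of $G/M$, which inflates to a good character of $G$ with the same degree and field, contradicting the counterexample hypothesis.

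Finally, if $G$ has no proper nontrivial normal subgroup of either type to feed the induction — the base case — then $G$ is characteristically simple, i.e. $G\cong S^k$ for a nonabelian simple group $S$. Here the Hall $\pi$-subgroup of $G$ projects onto a Hall $\pi$-subgroup of each factor $S$, which is nontrivial since the whole Hall subgroup is nontrivial and $\pi$ consists of odd primes; Theorem~\ref{thm:simple} then supplies a nontrivial $\chi_0\in\Irr_{\pi'}(S)$ with $\QQ(\chi_0)\subseteq\QQ(\zeta_p)$, and the external product $\chi_0\boxtimes 1_S\boxtimes\cdots\boxtimes 1_S\in\Irr(S^k)$ is nontrivial, has $\pi'$-degree $\chi_0(1)$, and has field of values $\QQ(\chi_0)\subseteq\QQ(\zeta_p)$. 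This directly contradicts that $G$ is a counterexample, completing the argument. The overall difficulty therefore concentrates not in the simple base case (already done) but in engineering the normal subgroup containing a full Hall $\pi$-subgroup so that every induction step stays within $\pi'$-indices; once that is arranged, each step is routine via the two field-of-values lemmas cited immediately before the theorem statement.
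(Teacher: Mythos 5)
Your proposal has a genuine gap, and it sits exactly where you predicted the difficulty would be: the mechanism for pushing a good character up from a normal subgroup $N$ to $G$ does not work as described. Two things go wrong. First, a subnormal chain $N=N_0\nor N_1\nor\cdots\nor N_k=G$ with every index prime exists only when $G/N$ is \emph{solvable}; it cannot be obtained by ``refining a composition-type series,'' since composition factors are simple but need not have prime order. Second, even when $G/N$ is solvable, forcing $N$ to contain a full Hall $\pi$-subgroup only makes the indices $\pi'$-primes, and since $\pi$ consists of \emph{odd} primes this leaves the index $2$ wide open: the quoted consequence of \cite[Lemma~2.2]{GHSV21} requires the index to be an odd prime distinct from $p$, and it genuinely fails at index $2$ (take $N=C_2\nor C_4=G$ and $\theta$ the nontrivial character of $N$: then $\QQ(\theta)=\QQ$, yet both constituents of $\theta^G$ have field $\QQ(i)\not\subseteq\QQ(\zeta_p)$ for odd $p$). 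So your claim that ``each step is covered by the quoted lemma'' is false already for almost simple groups $G=S.2$ with $N=S$. Worse, there are groups for which none of your cases applies at all: take $G=3\cdot\Al_6$ (quasi-simple) and $\pi=\{3\}$. The unique minimal normal subgroup is $M=\bZ(G)\cong C_3$, which is not a $\pi'$-group, so your quotient case is unavailable; climbing from $M$ to $G$ requires crossing $G/M\cong\Al_6$, which is non-solvable; no proper normal subgroup of $G$ contains a Sylow $3$-subgroup (proper normal subgroups of a quasi-simple group are central), so your ``cleanest fix'' is unavailable; and $G$ is not characteristically simple, so your external-product base case does not apply either.

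The paper avoids both obstacles by inducting from the \emph{top} rather than the bottom. It fixes a maximal normal subgroup $M\nor G$, so that $G/M$ is simple, and splits into three cases. If $G/M$ is cyclic of order $2$ or of order $q\in\pi$, one never touches $M$: the inflation of a nontrivial linear character of $G/M$ already has degree $1$ and values in $\QQ$ or $\QQ(\zeta_q)$. If $G/M$ is cyclic of odd prime order $r\notin\pi$, then --- and only then --- the induction hypothesis is applied to $M$ and a character is pushed up, via \cite[Corollary~6.19]{Isaacs1}: either $\psi^G$ is irreducible (its degree is multiplied by $r\notin\pi$ and its field only shrinks), or $\psi$ extends and the quoted lemma applies legitimately, since $r$ is odd and $r\neq p$. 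If $G/M$ is non-abelian simple, Theorem~\ref{thm:simple} is applied directly to $G/M$ with $\pi_1=\pi\cap\pi(G/M)$ and the resulting character is inflated to $G$. The idea you are missing is that when the top composition factor is $2$, a prime in $\pi$, or a non-abelian simple group, the desired character should be chosen \emph{trivial on} $M$ (inflated from the quotient) rather than lying over a character of $M$; characters are transported upward across exactly the one kind of index --- an odd prime outside $\pi$ --- that the field-of-values lemma can handle.
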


\begin{proof}
We argue by induction on $|G|$. Let $M \triangleleft\, G$ be a
normal subgroup such that $G/M$ is simple.

First assume that $G/M$ is abelian and that $|G/M| \in \{2\} \cup
\pi$. Then the inflation to $G$ of any nontrivial linear character
of $G/M$ satisfies the required conditions. Now suppose that $r :=
|G/M|$ is an odd prime not belonging to $\pi$. In particular, we
have $\pi(M) \supseteq \pi\cap\pi(G)$.

By the induction hypothesis and the fact that $M$ also has a Hall
$\pi$-subgroup, there exists $\psi \in \Irr_{\pi'}(M)$ and some $p
\in \pi$ such that $\QQ(\psi) \subseteq \QQ(\zeta_p)$. Let $\chi \in
\Irr(G)$ lie over $\psi$. By~\cite[Corollary~6.19]{Isaacs1}, either
$\chi_M = \psi$ or $\chi_M = \sum_{i=1}^r \psi_i$ is the sum of the
$G$-conjugates of $\psi$.

In the latter case, we may take $\chi = \psi^G$, with a note that
$\chi(1) = r \psi(1)$ is a $\pi'$-number and $\QQ(\chi) \subseteq
\QQ(\psi)\subseteq \QQ(\zeta_p)$. In the former case, every
irreducible character of $G$ lying over $\psi$ is an extension of
$\psi$, and hence has $\pi'$-degree. Moreover, as mentioned above,
one of these extensions has values in $\QQ(\zeta_p)$, as required.

Finally, suppose that $G/M$ is non-abelian. Set $\pi_1 := \pi(G/M)
\cap \pi$. (Note that $\pi_1$ could be empty.) Again, the group
$G/M$ has a Hall $\pi$-subgroup, which is also a Hall
$\pi_1$-subgroup. Theorem~\ref{thm:simple} then yields a character
$\chi \in \Irr_{\pi_1'}(G/M)$ such that $\QQ(\chi) \subseteq
\QQ(\zeta_p)$ for some $p \in \pi_1$. Since $\chi(1)$ divides
$|G/M|$, the character $\chi$ is also of $\pi'$-degree. The desired
character is obtained by inflating $\chi$ to $G$.
\end{proof}

We conclude with a remark that, in view of the above proof, the
conclusion of Theorem~\ref{thm:main} remains valid when $2 \in \pi$,
provided that the group $G$ is $\pi$-separable.


\end{document}